\newtheorem{thm}{Theorem}[section]
\newtheorem{lem}{Lemma}[section]
\newtheorem{coroll}{Corollary}[section]
\theoremstyle{definition}
\newtheorem{defn}{Definition}[section]
\newtheorem{example}{Example}[section]
\DeclareSymbolFont{pxfontssymbolsC}{U}{pxsyc}{m}{n}
\DeclareMathSymbol{\coloneqq}{\mathrel}{pxfontssymbolsC}{66}
\begin{document}

\title{On some new properties of fractional derivatives with Mittag-Leffler kernel}

\date{}


\author[1,2]{Dumitru Baleanu\thanks{Email: \texttt{dumitru@cankaya.edu.tr}}}
\author[3]{Arran Fernandez\thanks{Email: \texttt{af454@cam.ac.uk}}}

\affil[1]{{\small Department of Mathematics, Cankaya University, 06530 Balgat, Ankara, Turkey}}
\affil[2]{{\small Institute of Space Sciences, Magurele-Bucharest, Romania}}
\affil[3]{{\small Department of Applied Mathematics and Theoretical Physics, University of Cambridge, Wilberforce Road, CB3 0WA, United Kingdom}}

\maketitle

\vspace{-1cm}

\begin{abstract}\noindent We establish a new formula for the fractional derivative with Mittag-Leffler kernel, in the form of a series of Riemann--Liouville fractional integrals, which brings out more clearly the non-locality of fractional derivatives and is easier to handle for certain computational purposes. We also prove existence and uniqueness results for certain families of linear and nonlinear fractional ODEs defined using this fractional derivative. We consider the possibility of a semigroup property for these derivatives, and establish extensions of the product rule and chain rule, with an application to fractional mechanics.
\end{abstract}








\section{Introduction}
\label{sec-intro}
\textit{Fractional calculus} is the study of generalised orders of differentiation and integration (together referred to as \textit{differintegration}): beyond integer orders to real numbers, complex numbers, and beyond \cite{podlubny,samko,magin}.

There are many different definitions of fractional differintegrals, many of which conflict with each other in some regions of their domains of definition. One of the most commonly used is the \textit{Riemann--Liouville} formula, in which the $\alpha$th integral of a function $f$ is given by \cite{podlubny,samko}
\begin{equation}
\label{RL-int-defn}
D_{c+}^{-\alpha}f(t)\coloneqq\tfrac{1}{\Gamma(\alpha)}\int_c^t(t-\tau)^{\alpha-1}f(\tau)\,\mathrm{d}\tau,
\end{equation}
where $\alpha>0$ (so that this is an integral rather than a derivative) and $c$ is a constant of integration, generally taken to be either $0$ or $-\infty$. Fractional Riemann--Liouville \textit{derivatives} are then defined to be standard derivatives of fractional RL integrals:
\begin{equation}
\label{RL-deriv-defn}
D_{c+}^{\alpha}f(t)\coloneqq\frac{\mathrm{d}^n}{\mathrm{d}t^n}\Big({}_cD_t^{\alpha-n}f(t)\Big), \;n\coloneqq\lfloor\alpha\rfloor+1,
\end{equation}
for any $\alpha>0$.

Another frequently used definition of fractional differintegrals is the \textit{Caputo} one \cite{podlubny,samko}. Here, fractional integrals are again defined using the formula (\ref{RL-int-defn}), while fractional derivatives are defined to be fractional RL integrals of standard derivatives:
\begin{equation}
\label{Caputo-deriv-defn}
D_{c+}^{\alpha}f(t)\coloneqq D_{c+}^{\alpha-n}\Big(\tfrac{\mathrm{d}^n}{\mathrm{d}t^n}f(t)\Big), \;n\coloneqq\lfloor\alpha\rfloor+1,
\end{equation}
for any $\alpha>0$.

Fractional derivatives and integrals of this Riemann--Liouville type have a vast number of applications across many fields of science and engineering \cite{valerio}. For example, they can be used to model controllability \cite{liu}, viscoelastic flows \cite{mainardi, bazhlekova}, chaotic systems \cite{petras}, Stokes problems \cite{yu}, thermoelasticity \cite{povstenko}, several vibration and diffusion processes \cite{atanackovic, sousa, al-refai}, bioengineering problems \cite{magin}, and many other complex phenomena.

A natural question now is to investigate which properties of fractional derivatives make them so suitable to model successfully certain complex systems from several branches of science and engineering. The answer to this question relies  on the property exhibited by many of the aforementioned systems of \textit{non-local} dynamics: that is, the processes and dynamics possess a certain degree of memory, and the fractional operators defined above are non-local, while the ordinary derivative is clearly a local operator. Thus it can be useful to consider fractional-order models in any problem involving global optimisation, where some degree of memory is expected; many such problems are found for example in control theory.

We recall that the fractional integrator, as well as the related continuous frequency distributed differential model, represents an important tool for the simulation of fractional systems together with the solution of initial condition problems \cite{lorenzo,trigeassou1,trigeassou2,trigeassou3}. We stress the fact that the infinite-dimensional state vector of fractional integrators gives a direct generalisation to fractional calculus of the theoretical results corresponding to the integer order systems.

Some applications when the above mentioned results appear are in control theory \cite{razminia} and fractional variational principles \cite{atanackovic}, e.g. on fractional Euler-Lagrange and Hamilton equations and the fractional generalisation of total time derivatives \cite{baleanu2}.

Recently, Caputo and Fabrizio \cite{caputo} have proposed a new definition of fractional derivatives: \[\prescript{CF}{}D_{c+}^{\alpha}f(x)\coloneqq\tfrac{M(\alpha)}{1-\alpha}\frac{\mathrm{d}}{\mathrm{d}t}\int_c^x\exp\big[\tfrac{-\alpha}{1-\alpha}(x-y)\big]f(y)\,\mathrm{d}y,\] valid for $0<\alpha<1$, with $M(\alpha)$ being a normalisation function satisfying $M(0)=M(1)=1$. The basic challenge they were addressing was whether it is possible to construct another type of fractional operator which has nonsingular kernel and which can better describe in some cases the dynamics of non-local phenomena. The Caputo--Fabrizio definition has already found applications in areas such as diffusion modelling \cite{hristov} and mass-spring-damper systems \cite{al-salti}.

Here we shall mostly be considering a more recently developed definition for fractional differintegrals, due to \cite{atangana}. This new type of calculus addresses the same underlying challenge as that of Caputo and Fabrizio, but it uses a kernel which is non-local as well as non-singular, namely the Mittag-Leffler function:
\begin{align*}
\prescript{ABR}{}D^{\alpha}_{a+}f(t)&=\tfrac{B(\alpha)}{1-\alpha}\frac{\mathrm{d}}{\mathrm{d}t}\int_a^tf(x)E_\alpha\Big(\tfrac{-\alpha}{1-\alpha}(t-x)^{\alpha}\Big)\,\mathrm{d}x, \\
\prescript{ABC}{}D^{\alpha}_{a+}f(t)&=\tfrac{B(\alpha)}{1-\alpha}\int_a^tf'(x)E_\alpha\Big(\tfrac{-\alpha}{1-\alpha}(t-x)^{\alpha}\Big)\,\mathrm{d}x,
\end{align*}
again valid for $0<\alpha<1$ with $B(\alpha)$ being a normalisation function. (We shall examine these definitions and the required assumptions in more detail below.) In this way we are able to describe a different type of dynamics of non-local complex systems. In fact the classical fractional calculus and the one corresponding to the Mittag-Leffler nonsingular kernel complement each other in the attempt to better describe the hidden aspects of non-local dynamical systems. Fractional calculus based on the non-singular Mittag-Leffler kernel is more easily used from the numerical viewpoint, and this has been studied for example in \cite{djida}.

We note that the Mittag-Leffler function is already known to be highly useful in fractional calculus \cite{mathai}. It has also been an important part of the definitions of certain other fractional differintegrals, such as that proposed by Pskhu \cite{pskhu}.

Applications of the new AB formula have been explored in fields as diverse as chaos theory \cite{alkahtani}, heat transfer \cite{atangana}, and variational problems \cite{abdeljawad2}. Furthermore, it is natural to address the same questions about the fractional integrator and applications of these new operators in the theory of control and related fractional variational Euler-Lagrange and Hamilton equations (see \cite{abdeljawad2,djida,gomez-aguilar}). Besides, we expect to obtain some new terms in all generalised formulae from the classical fractional calculus, and this aspect will be important for the related applications.

Some basic properties of the new AB differintegrals have already been proven in several recent papers: for example, the original paper \cite{atangana} established the formulae for Laplace transforms of AB differintegrals and some Lipschitz-type inequalities; the paper \cite{abdeljawad2} considered integration by parts identities and Euler-Lagrange equations; and the paper \cite{djida} established, using Laplace transforms, analogues of the Newton--Leibniz formula for the integral of a derivative. However, much of the ground-level theory of this new model of fractional calculus has not yet been fully developed, and this paper aims to add to this basic theory by establishing new fundamental results in the field.

The structure of our paper is as follows. In section \ref{sec-series} we prove a new series formula for AB derivatives, which provides a more direct connection to non-locality properties of fractional calculus, and which will likely also be useful in numerical applications, since we can truncate the series at some finite point without having to deal with the transcendental Mittag-Leffler function explicitly. In section \ref{sec-ode} we get the ball rolling with the theory of fractional differential equations, by solving some general classes of linear and nonlinear ODEs in the AB model, with reference to some possible applications for these ODEs. In section \ref{sec-semigroup} we consider the semigroup property, an important point of interest in any model of fractional calculus, and the conditions for its validity in the AB model. In sections \ref{sec-product} and \ref{sec-chain}, we use the series formula of section \ref{sec-series} to prove extensions of the product rule and chain rule to the AB model, and briefly explore an application of the chain rule to fractional dynamical systems.

\section{A new formula for the fractional derivative with Mittag-Leffler kernel}
\label{sec-series}
The following definitions for fractional derivatives are established in \cite{atangana}.

\begin{defn}
\label{ABRL-defn}
The ABR fractional derivative (R denotes Riemann--Liouville type) is defined by
\begin{equation}
\label{ABRL}
\prescript{ABR}{}D^{\alpha}_{a+}f(t)=\tfrac{B(\alpha)}{1-\alpha}\frac{\mathrm{d}}{\mathrm{d}t}\int_a^tf(x)E_\alpha\Big(\tfrac{-\alpha}{1-\alpha}(t-x)^{\alpha}\Big)\,\mathrm{d}x
\end{equation}
for $0<\alpha<1$, $a<t<b$, and $f\in L^1(a,b)$.
\end{defn}

\begin{defn}
\label{ABC-defn}
The ABC fractional derivative (C denotes Caputo type) is defined by
\begin{equation}
\label{ABC}
\prescript{ABC}{}D^{\alpha}_{a+}f(t)=\tfrac{B(\alpha)}{1-\alpha}\int_a^tf'(x)E_\alpha\Big(\tfrac{-\alpha}{1-\alpha}(t-x)^{\alpha}\Big)\,\mathrm{d}x
\end{equation}
for $0<\alpha<1$, $a<t<b$, and $f$ a differentiable function on $[a,b]$ such that $f'\in L^1(a,b)$.
\end{defn}

In the above definitions, the function $E_{\alpha}$ is the Mittag-Leffler function, defined by:
\begin{equation}
\label{ML}
E_{\alpha}(x)=\sum_{n=0}^{\infty}\frac{x^n}{\Gamma(\alpha n+1)}.
\end{equation}
In general, the normalisation function $B(\alpha)$ can be any function satisfying $B(0)=B(1)=1$, but for the present work we shall assume that all values of $B(\alpha)$ are real and strictly positive. The reason for introducing this multiplier function, often taken simply to be identically $1$, is because sometimes it is desired to integrate over the order of differentiation $\alpha$, e.g. in some modelling problems. We may wish some values of $\alpha$ to contribute more than others, and hence we allow ourselves leeway to weight different values of $\alpha$ if we so desire.

\begin{lem}
\label{AB-fn-spaces}
For given $\alpha,a,t\in\mathbb{R}$ with $a<t$ and $0<\alpha<1$, and a given normalisation function $B$, the ABR derivative $\prescript{ABR}{}D^{\alpha}_{a+}f(t)$ is well-defined for any function $f$ such that the RL integral $D_{a+}^{-\alpha}f(t)$ is well-defined, while the ABC derivative $\prescript{ABC}{}D^{\alpha}_{a+}f(t)$ is well-defined for any differentiable function $f$ such that $f'$ is an $L^1$ function.
\end{lem}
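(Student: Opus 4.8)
The plan is to treat the two derivatives separately: the ABC case is essentially immediate, while the ABR case carries the real content and is handled by rewriting the defining integral as a series of Riemann--Liouville integrals. For the ABC derivative I would first note that, for $0<\alpha<1$ and $a<x<t$, the argument $\tfrac{-\alpha}{1-\alpha}(t-x)^{\alpha}$ is non-positive, and that $E_\alpha$ is bounded on $(-\infty,0]$ in this range (indeed $0\le E_\alpha(-s)\le E_\alpha(0)=1$ for $s\ge0$, a standard property of the Mittag-Leffler function). Hence the kernel in (\ref{ABC}) is bounded by $1$, so the integrand is dominated in absolute value by $\tfrac{B(\alpha)}{1-\alpha}|f'(x)|$, which is integrable on $(a,t)$ by hypothesis. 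The defining integral therefore converges absolutely and the ABC derivative is well-defined.

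For the ABR derivative the task is to make sense both of the inner integral $I(t)\definedby\int_a^t f(x)E_\alpha\big(\tfrac{-\alpha}{1-\alpha}(t-x)^{\alpha}\big)\,\mathrm{d}x$ and of its $t$-derivative. Writing $\lambda\definedby\tfrac{-\alpha}{1-\alpha}$, I would first extract from the hypothesis that $f\in L^1(a,t)$: on $(a,t)$ the Riemann--Liouville weight is increasing in $x$ (since $\alpha-1<0$), so $(t-x)^{\alpha-1}\ge(t-a)^{\alpha-1}>0$ and
\[
\int_a^t|f(x)|\,\mathrm{d}x\le(t-a)^{1-\alpha}\int_a^t(t-x)^{\alpha-1}|f(x)|\,\mathrm{d}x<\infty,
\]
the finiteness of the right-hand side being exactly the well-definedness of $D_{a+}^{-\alpha}f(t)$. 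Expanding the kernel via (\ref{ML}), the partial sums of $|f(x)|\sum_n\tfrac{|\lambda|^n}{\Gamma(\alpha n+1)}(t-x)^{\alpha n}$ are all dominated by $|f(x)|\,E_\alpha\big(|\lambda|(t-x)^{\alpha}\big)$, which is integrable because $E_\alpha(|\lambda|(t-x)^\alpha)$ is continuous, hence bounded, on $[a,t]$; so term-by-term integration is justified and
\[
I(t)=\sum_{n=0}^{\infty}\lambda^{n}\,D_{a+}^{-(\alpha n+1)}f(t).
\]
Each term is well-defined by the semigroup property of Riemann--Liouville integration: $D_{a+}^{-(\alpha n+1)}f=D_{a+}^{-(\alpha(n-1)+1)}D_{a+}^{-\alpha}f$ for $n\ge1$ and $D_{a+}^{-1}f=D_{a+}^{-(1-\alpha)}D_{a+}^{-\alpha}f$, that is, an integration of strictly positive order applied to the well-defined function $D_{a+}^{-\alpha}f$.

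It remains to differentiate, and this is where I expect the main obstacle to lie. Differentiating term by term and using $\tfrac{\mathrm{d}}{\mathrm{d}t}D_{a+}^{-(\alpha n+1)}f=D_{a+}^{-\alpha n}f$ for $n\ge1$ together with $\tfrac{\mathrm{d}}{\mathrm{d}t}D_{a+}^{-1}f(t)=f(t)$ (Lebesgue differentiation, valid almost everywhere as $f\in L^1$) would give
\[
\prescript{ABR}{}D^{\alpha}_{a+}f(t)=\tfrac{B(\alpha)}{1-\alpha}\Big(f(t)+\sum_{n=1}^{\infty}\lambda^{n}D_{a+}^{-\alpha n}f(t)\Big).
\]
The delicacy is that the $n=0$ term returns $f$, which is only $L^1$, so one cannot expect the full differentiated series to converge uniformly; instead I would peel off this term (treated pointwise a.e.\ by the Lebesgue differentiation theorem) and show the tail converges uniformly. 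For $n\ge1$ the estimate $(t-x)^{\alpha n-1}\le(t-a)^{\alpha(n-1)}(t-x)^{\alpha-1}$ yields
\[
\big|D_{a+}^{-\alpha n}f(t)\big|\le\frac{(t-a)^{\alpha(n-1)}\Gamma(\alpha)}{\Gamma(\alpha n)}\,D_{a+}^{-\alpha}|f|(t),
\]
and since $\Gamma(\alpha n)$ grows super-exponentially the series $\sum_{n\ge1}|\lambda|^n(t-a)^{\alpha(n-1)}/\Gamma(\alpha n)$ converges, legitimising the term-by-term differentiation of the tail and showing it is continuous; hence the ABR derivative exists almost everywhere and is well-defined. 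As a cross-check I would note the alternative route of viewing $I$ as the convolution of $f$ with $g(s)=E_\alpha(\lambda s^\alpha)$ and applying the Leibniz rule $\tfrac{\mathrm{d}}{\mathrm{d}t}I(t)=f(t)g(0)+\int_a^t f(x)g'(t-x)\,\mathrm{d}x$, where $g(0)=1$ and $g'$ has exactly a $(t-x)^{\alpha-1}$-type singularity at the origin, so that the remaining integral converges precisely under the Riemann--Liouville hypothesis, matching the computation above.
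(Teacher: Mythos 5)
Your proof is correct, but the ABR half follows a genuinely different route from the paper's. The paper differentiates the defining integral directly via the Leibniz rule for a variable upper limit, producing the boundary term $f(t)$ (since $E_\alpha(0)=1$) plus $\int_a^t f(x)\,\tfrac{\mathrm{d}}{\mathrm{d}t}E_\alpha\big(\tfrac{-\alpha}{1-\alpha}(t-x)^{\alpha}\big)\,\mathrm{d}x$, and then argues from the asymptotics $\tfrac{\mathrm{d}}{\mathrm{d}t}E_\alpha\big(\tfrac{-\alpha}{1-\alpha}(t-x)^{\alpha}\big)\sim\tfrac{-\alpha^2}{(1-\alpha)\Gamma(\alpha+1)}(t-x)^{\alpha-1}$ as $x\to t^-$ that the integral converges precisely when the RL integral $D_{a+}^{-\alpha}f(t)$ does; this is exactly the ``convolution cross-check'' you offer at the end, so you have in effect rediscovered the paper's proof as an afterthought to your main one. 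Your main argument instead expands the kernel into a series of RL integrals first --- anticipating Theorem \ref{ABRL-series} of the paper --- peels off the $n=0$ term (yielding $f(t)$ a.e.\ by Lebesgue differentiation) and controls the tail with the bound $\big|D_{a+}^{-\alpha n}f(t)\big|\le\Gamma(\alpha)(t-a)^{\alpha(n-1)}\Gamma(\alpha n)^{-1}D_{a+}^{-\alpha}|f|(t)$. This buys you more: you obtain the series representation and the convergence of the tail en route, and your interchanges of sum and integral are justified by explicit domination, whereas the paper's ``iff'' drawn from the leading asymptotics is stated loosely (without a sign condition on $f$ only the ``if'' direction is clean, but that is all the lemma asserts). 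The cost is that licensing term-by-term differentiation genuinely requires control of $D_{a+}^{-\alpha}|f|$ on a neighbourhood of $t$, not merely finiteness at the single point $t$ as the lemma's hypothesis literally provides --- a gap worth flagging, though the paper's own proof shares it, since differentiating at $t$ inevitably uses data near $t$; the clean repair in your framework is to write $D_{a+}^{-(\alpha n+1)}f(t)=\int_a^t D_{a+}^{-\alpha n}f(s)\,\mathrm{d}s$, interchange sum and integral by your domination estimate, and read off the a.e.\ derivative of the resulting absolutely continuous tail. For the ABC half, your global domination ($0\le E_\alpha(-s)\le1$, or indeed just boundedness of the entire function $E_\alpha$ on the compact set of attained arguments, which avoids invoking complete monotonicity) gives absolute convergence directly and is, if anything, tidier than the paper's asymptotic argument near $x=t$.
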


\begin{proof}
We first consider the ABR derivative, and differentiate explicitly:
\begin{align}
\nonumber \prescript{ABR}{}D^{\alpha}_{a+}f(t)&=\tfrac{B(\alpha)}{1-\alpha}\frac{\mathrm{d}}{\mathrm{d}t}\int_a^tf(x)E_\alpha\Big(\tfrac{-\alpha}{1-\alpha}(t-x)^{\alpha}\Big)\,\mathrm{d}x \\
\nonumber &=\frac{B(\alpha)}{1-\alpha}\Bigg[f(t)E_\alpha\Big(\tfrac{-\alpha}{1-\alpha}(0)^{\alpha}\Big)+\int_a^tf(x)\frac{\mathrm{d}}{\mathrm{d}t}\Bigg(E_\alpha\Big(\tfrac{-\alpha}{1-\alpha}(t-x)^{\alpha}\Big)\Bigg)\,\mathrm{d}x\Bigg] \\
\label{ABRL-condn} &=\frac{B(\alpha)}{1-\alpha}\Bigg[f(t)+\int_a^tf(x)\sum_{n=0}^{\infty}\frac{n\alpha\big(\tfrac{-\alpha}{1-\alpha}\big)^n(t-x)^{n\alpha-1}}{\Gamma(n\alpha+1)}\,\mathrm{d}x\Bigg]
\end{align}
The function $E_\alpha\big(\tfrac{-\alpha}{1-\alpha}(t-x)^{\alpha}\big)$ and its $t$-derivative, considered as functions of $x$, are holomorphic at every point in the interval $[a,t)$. And the interval of integration is finite, so the only way the integral could possibly diverge would be due to behaviour near $x=t$. Thus the conditions for the ABR derivative to be well-defined are exactly that the integral in (\ref{ABRL-condn}) should behave well as $x\rightarrow t$ from below.

As $x\rightarrow t$, we have $(t-x)^{\alpha}\rightarrow0$ and therefore \[E_\alpha\Big(\tfrac{-\alpha}{1-\alpha}(t-x)^{\alpha}\Big)\sim1+\tfrac{-\alpha}{(1-\alpha)\Gamma(\alpha+1)}(t-x)^{\alpha},\] giving \[\frac{\mathrm{d}}{\mathrm{d}t}\Bigg(E_\alpha\Big(\tfrac{-\alpha}{1-\alpha}(t-x)^{\alpha}\Big)\Bigg)\sim\tfrac{-\alpha^2}{(1-\alpha)\Gamma(\alpha+1)}(t-x)^{\alpha-1}.\] So the integral in (\ref{ABRL-condn}) converges if and only if \[\int_a^tf(x)(t-x)^{\alpha-1}\,\mathrm{d}x\] converges, i.e. if and only if the RL integral $D_{a+}^{-\alpha}f(t)$ is well-defined.

Now we consider the ABC derivative:
\begin{equation}
\label{ABC-condn}
\prescript{ABC}{}D^{\alpha}_{a+}f(t)=\tfrac{B(\alpha)}{1-\alpha}\int_a^tf'(x)E_\alpha\Big(\tfrac{-\alpha}{1-\alpha}(t-x)^{\alpha}\Big)\,\mathrm{d}x
\end{equation}
Once again, the function $E_\alpha\big(\tfrac{-\alpha}{1-\alpha}(t-x)^{\alpha}\big)$ is holomorphic as a function of $x$ at every point in the interval $[a,t)$, and the interval of integration is finite. So the conditions for the ABC derivative to be well-defined are exactly that this integral should behave well as $x\rightarrow t$ from below. We also know that as $x\rightarrow t$, \[E_\alpha\Big(\tfrac{-\alpha}{1-\alpha}(t-x)^{\alpha}\Big)\sim1,\] so the integral in (\ref{ABC-condn}) converges if and only if \[\int_a^tf'(x)\,\mathrm{d}x\] converges, for which it suffices that $f$ is differentiable and $f'$ is $L^1$.
\end{proof}

Lemma \ref{AB-fn-spaces} shows how we chose the conditions for the function $f$ in Definitions \ref{ABRL-defn} and \ref{ABC-defn}: it is on these function spaces for $f$ that the ABR and ABC fractional derivatives are well-defined. (It is shown in \cite{samko} that the RL integral is well-defined for $f\in L^1[a,b]$.)

We now prove the main result of this section.

\begin{thm}
\label{ABRL-series}
The ABR fractional derivative can be expressed as
\begin{equation}
\label{ABRL-series-formula1}
\prescript{ABR}{}D^{\alpha}_{a+}f(t)=\frac{B(\alpha)}{1-\alpha}\sum_{n=0}^{\infty}\Big(\frac{-\alpha}{1-\alpha}\Big)^n\frac{\mathrm{d}}{\mathrm{d}t}\Big(\prescript{RL}{}I^{\alpha n+1}_{a+}f(t)\Big),
\end{equation}
or equivalently (provided composition of RL differintegrals of $f$ works as it should) as
\begin{equation}
\label{ABRL-series-formula2}
\prescript{ABR}{}D^{\alpha}_{a+}f(t)=\frac{B(\alpha)}{1-\alpha}\sum_{n=0}^{\infty}\Big(\frac{-\alpha}{1-\alpha}\Big)^n\prescript{RL}{}I^{\alpha n}_{a+}f(t),
\end{equation}
each series converging locally uniformly in $t$ for any $a,\alpha,f$ satisfying the conditions laid out in Definition \ref{ABRL-defn}.
\end{thm}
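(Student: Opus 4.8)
The plan is to expand the Mittag-Leffler kernel of (\ref{ABRL}) as its defining power series (\ref{ML}), interchange the summation with both the integral and the derivative, and recognise each resulting term as a Riemann--Liouville integral. Setting $F(t)\definedby\int_a^tf(x)E_\alpha\big(\tfrac{-\alpha}{1-\alpha}(t-x)^{\alpha}\big)\,\mathrm{d}x$ and writing $c_n\definedby\big(\tfrac{-\alpha}{1-\alpha}\big)^n$, termwise integration of the kernel series turns $F$ into $\sum_n c_n\,\prescript{RL}{}I^{\alpha n+1}_{a+}f(t)$, because $\tfrac{1}{\Gamma(\alpha n+1)}\int_a^t(t-x)^{\alpha n}f(x)\,\mathrm{d}x=\prescript{RL}{}I^{\alpha n+1}_{a+}f(t)$. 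The ABR derivative is then $\tfrac{B(\alpha)}{1-\alpha}F'(t)$, so (\ref{ABRL-series-formula1}) amounts to differentiating this series termwise, and (\ref{ABRL-series-formula2}) follows from the composition rule $\frac{\mathrm{d}}{\mathrm{d}t}\prescript{RL}{}I^{\beta}_{a+}f=\prescript{RL}{}I^{\beta-1}_{a+}f$ (this is where the ``composition works as it should'' proviso enters).

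The summation--integration interchange is the routine step. For $x\in[a,t]$ the kernel series is dominated termwise by $\tfrac{1}{\Gamma(\alpha n+1)}\big(\tfrac{\alpha}{1-\alpha}\big)^n(t-a)^{\alpha n}$, whose sum is the finite value $E_\alpha\big(\tfrac{\alpha}{1-\alpha}(t-a)^{\alpha}\big)$, so by the Weierstrass $M$-test the series converges uniformly in $x$; since $f\in L^1(a,b)$, the tail of $F(t)$ is bounded by $\|f\|_{L^1}$ times this uniform tail and hence vanishes, which gives the series for $F$. Restricting $t$ to a compact $[a,T]\subset[a,b)$ makes the same bound uniform in $t$, so the series for $F$ converges locally uniformly.

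The differentiation is the main obstacle. Termwise it reads $\sum_n c_n\,\prescript{RL}{}I^{\alpha n}_{a+}f(t)$, and here the kernel $(t-x)^{\alpha n-1}$ is singular at $x=t$ for the finitely many $n$ with $\alpha n<1$, so these terms need not be bounded in $t$ and a naive $M$-test fails. I would instead argue through the integral form: using $\prescript{RL}{}I^{\alpha n+1}_{a+}f(t)=\int_a^t\prescript{RL}{}I^{\alpha n}_{a+}f(s)\,\mathrm{d}s$ together with the standard estimate $\|\prescript{RL}{}I^{\beta}_{a+}f\|_{L^1(a,T)}\leq\tfrac{(T-a)^{\beta}}{\Gamma(\beta+1)}\|f\|_{L^1}$, the derivative series is dominated in $L^1(a,T)$ by $\|f\|_{L^1}E_\alpha\big(\tfrac{\alpha}{1-\alpha}(T-a)^{\alpha}\big)<\infty$; this $L^1$ convergence licenses interchanging sum and integral in $F(t)=\int_a^t\sum_n c_n\,\prescript{RL}{}I^{\alpha n}_{a+}f(s)\,\mathrm{d}s$, and the fundamental theorem of calculus then yields $F'(t)=\sum_n c_n\,\prescript{RL}{}I^{\alpha n}_{a+}f(t)$. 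For the tail indices $\alpha n\geq1$ the kernel is bounded, and a second $M$-test against the convergent series $\sum_n\tfrac{(\alpha/(1-\alpha))^n(T-a)^{\alpha n-1}}{\Gamma(\alpha n)}$ upgrades the convergence to locally uniform, matching the claim. Multiplying by $\tfrac{B(\alpha)}{1-\alpha}$ gives (\ref{ABRL-series-formula1}) and, via the composition rule applied termwise, (\ref{ABRL-series-formula2}). As a reassuring check, resumming the derivative series against $f$ recovers precisely the convergent integral (\ref{ABRL-condn}) already shown to be well-defined in Lemma \ref{AB-fn-spaces}.
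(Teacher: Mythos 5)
Your proposal is correct, and its skeleton coincides with the paper's proof: both expand the kernel of (\ref{ABRL}) via the power series (\ref{ML}), interchange summation with the integral, and recognise the $n$th term as $\prescript{RL}{}I^{\alpha n+1}_{a+}f(t)$ --- that three-line computation is the paper's entire argument. Where you genuinely depart is at the step the paper does not argue at all: it pulls $\frac{\mathrm{d}}{\mathrm{d}t}$ through the sum formally, on no stronger grounds than the entirety of $E_\alpha$, and it never verifies the ``locally uniformly convergent'' clause of the statement. You correctly isolate this interchange as the analytic crux --- a naive $M$-test fails on the finitely many indices with $\alpha n<1$, whose kernels $(t-x)^{\alpha n-1}$ are singular at $x=t$ --- and you repair it with the bound $\|\prescript{RL}{}I^{\beta}_{a+}f\|_{L^1(a,T)}\leq\tfrac{(T-a)^{\beta}}{\Gamma(\beta+1)}\|f\|_{L^1}$, absolute convergence of the differentiated series in $L^1(a,T)$, and the fundamental theorem of calculus, while your tail $M$-test on the indices $\alpha n\geq1$ is what actually substantiates the locally uniform convergence the theorem asserts. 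What your route buys is a proof valid for every $f\in L^1(a,b)$, the full function space of Definition \ref{ABRL-defn}; its honest price is an almost-everywhere qualifier, since for merely integrable $f$ the FTC identifies $F'(t)$ with $\sum_n\big(\tfrac{-\alpha}{1-\alpha}\big)^n\prescript{RL}{}I^{\alpha n}_{a+}f(t)$ only at Lebesgue points --- which is in any case the natural scope, because by Lemma \ref{AB-fn-spaces} the ABR derivative itself exists only where $\prescript{RL}{}I^{\alpha}_{a+}f$ does, and it is no weaker than the paper's formal manipulation. One small logical point to fix: your FTC argument in fact establishes (\ref{ABRL-series-formula2}) directly and unconditionally (a.e.), and it is (\ref{ABRL-series-formula1}) that then requires the termwise identity $\frac{\mathrm{d}}{\mathrm{d}t}\prescript{RL}{}I^{\alpha n+1}_{a+}f=\prescript{RL}{}I^{\alpha n}_{a+}f$; this inverts the paper's logic, where (\ref{ABRL-series-formula1}) is primary and the composition proviso guards (\ref{ABRL-series-formula2}), so you should attach the proviso to (\ref{ABRL-series-formula1}) rather than to (\ref{ABRL-series-formula2}) as your opening paragraph does.
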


\begin{proof}
The Mittag-Leffler function $E_{\alpha}(x)$ is an entire function of $x$, the series (\ref{ML}) converging locally uniformly in the whole complex plane. So the ABR fractional derivative can be rewritten as follows:
\begin{align*}
\prescript{ABR}{}D^{\alpha}_{a+}f(t)&=\frac{B(\alpha)}{1-\alpha}\frac{\mathrm{d}}{\mathrm{d}t}\int_a^tf(x)\sum_{n=0}^{\infty}\frac{(-\alpha)^n(t-x)^{\alpha n}}{(1-\alpha)^n\Gamma(\alpha n+1)}\,\mathrm{d}x \\
&=\frac{B(\alpha)}{1-\alpha}\sum_{n=0}^{\infty}\Big(\frac{-\alpha}{1-\alpha}\Big)^n\frac{1}{\Gamma(\alpha n+1)}\frac{\mathrm{d}}{\mathrm{d}t}\int_a^t(t-x)^{\alpha n}f(x)\,\mathrm{d}x \\
&=\frac{B(\alpha)}{1-\alpha}\sum_{n=0}^{\infty}\Big(\frac{-\alpha}{1-\alpha}\Big)^n\frac{\mathrm{d}}{\mathrm{d}t}\Big(\prescript{RL}{}I^{\alpha n+1}_{a+}f(t)\Big),
\end{align*}
where $\prescript{RL}{}I$ is the standard Riemann--Liouville fractional integral defined in section \ref{sec-intro}.
\end{proof}

In fractional calculus, dealing with convergent series is a standard requirement. Let us recall that the Grunwald-Letnikov differintegral is expressed as a series, and so are the expressions of the fractional Leibniz rule and fractional chain rule. In fact these fundamental tools and formulae reflect the non-locality of the operators and play a genuine role in many real world applications. Thus we expect that the series formula given by Theorem \ref{ABRL-series} may be more applicable than the original formula given by Definition \ref{ABRL-defn}, in many contexts and applications where the non-locality property is important.

The new series formula may also be useful from a numerical point of view. The original formula for AB derivatives is written in terms of the transcendental Mittag-Leffler function, and any explicit calculations of AB derivatives would necessarily involve dealing with this function in some way. But with the new formula, we can obtain an approximation to the AB derivative by truncating the series after some finite number of terms and then using standard Riemann--Liouville numerical methods to estimate each term of the sum. We have not investigated the numerical applications in detail, but we hope that the results of the current work will make it easier to do so.

As we shall see later on in sections \ref{sec-product} and \ref{sec-chain}, the series formula also enables us to prove fundamental results such as the product rule and chain rule for fractional AB derivatives.

Theorem \ref{ABRL-series} enables us to derive the formula for Laplace transforms of ABR derivatives in a way different from that used in \cite{atangana}. The following identity is equation (9) in \cite{atangana}:
\begin{equation}
\label{ABRL-Laplace-old}
\widehat{\prescript{ABR}{}D^{\alpha}_{0+}f}(s)=\frac{B(\alpha)}{1-\alpha}\bigg(\frac{s^{\alpha}}{s^{\alpha}+\tfrac{\alpha}{1-\alpha}}\bigg)\hat{f}(s),
\end{equation}
valid for any sufficiently well-behaved function $f$. We can now prove this  by taking Laplace transforms directly on the series in (\ref{ABRL-series-formula1}):
\begin{align*}
\widehat{\prescript{ABR}{}D^{\alpha}_{0+}f}(s)&=\frac{B(\alpha)}{1-\alpha}\sum_{n=0}^{\infty}\Big(\frac{-\alpha}{1-\alpha}\Big)^n\widehat{\bigg(\frac{\mathrm{d}}{\mathrm{d}t}\prescript{RL}{}I^{\alpha n+1}_{0+}f\bigg)}(s) \\
&=\frac{B(\alpha)}{1-\alpha}\sum_{n=0}^{\infty}\Big(\frac{-\alpha}{1-\alpha}\Big)^n\bigg(s\big(s^{-\alpha n-1}\hat{f}(s)\big)-\prescript{RL}{}I^{\alpha n+1}_{0+}f(0)\bigg) \\
&=\frac{B(\alpha)}{1-\alpha}\sum_{n=0}^{\infty}\bigg[\Big(\tfrac{-\alpha}{1-\alpha}s^{-\alpha}\Big)^n\bigg]\hat{f}(s)-\frac{B(\alpha)}{1-\alpha}\sum_{n=0}^{\infty}\Big(\frac{-\alpha}{1-\alpha}\Big)^n\prescript{RL}{}I^{\alpha n+1}_{0+}f(0) \\
&=\frac{B(\alpha)}{1-\alpha}\Big(1-\tfrac{-\alpha}{1-\alpha}s^{-\alpha}\Big)^{-1}\hat{f}(s)-\prescript{ABR}{}D^{\alpha}_{0+}\circ\prescript{RL}{}I_{0+}f(0) \\
&=\frac{B(\alpha)}{1-\alpha}\bigg(\frac{s^{\alpha}}{s^{\alpha}+\tfrac{\alpha}{1-\alpha}}\bigg)\hat{f}(s)-\prescript{ABR}{}D^{\alpha}_{0+}\circ\prescript{RL}{}I_{0+}f(0).
\end{align*}
Thus we have derived the following more general formula for Laplace transforms of ABR derivatives, which reduces to (\ref{ABRL-Laplace-old}) when $f$ has sufficiently nice convergence properties at the lower limit $t=0$:
\begin{equation}
\label{ABRL-Laplace-new}
\widehat{\prescript{ABR}{}D^{\alpha}_{0+}f}(s)=\frac{B(\alpha)}{1-\alpha}\bigg(\frac{s^{\alpha}}{s^{\alpha}+\tfrac{\alpha}{1-\alpha}}\bigg)\hat{f}(s)-\prescript{ABR}{}D^{\alpha}_{0+}\circ\prescript{RL}{}I_{0+}f(0).
\end{equation}

Theorem \ref{ABRL-series} also has a number of other useful corollaries.

\begin{coroll}
\label{ABRL-inverse}
The AB fractional integral operator $\prescript{AB}{}I^{\alpha}_{a+},$ defined by
\begin{equation}
\label{ABI}
\prescript{AB}{}I^{\alpha}_{a+}f(t)=\frac{1-\alpha}{B(\alpha)}f(t)+\frac{\alpha}{B(\alpha)}\prescript{RL}{}I_{a+}^{\alpha}f(t),
\end{equation}
is both a left and right inverse to the ABR fractional differential operator $\prescript{ABR}{}D^{\alpha}_{a+}$ whenever $a,\alpha,f$ satisfy the conditions from Definition \ref{ABRL-defn}.
\end{coroll}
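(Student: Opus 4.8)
The plan is to exploit the series representation (\ref{ABRL-series-formula2}) of the ABR derivative together with the semigroup property of Riemann--Liouville integrals. Writing $\lambda\definedby\tfrac{-\alpha}{1-\alpha}$ for brevity, Theorem \ref{ABRL-series} expresses the ABR derivative as $\prescript{ABR}{}D^{\alpha}_{a+}=\tfrac{B(\alpha)}{1-\alpha}\sum_{n=0}^{\infty}\lambda^n\,\prescript{RL}{}I^{\alpha n}_{a+}$, where the $n=0$ term is the identity operator, while the AB integral (\ref{ABI}) is the two-term operator $\prescript{AB}{}I^{\alpha}_{a+}=\tfrac{1-\alpha}{B(\alpha)}\mathrm{Id}+\tfrac{\alpha}{B(\alpha)}\prescript{RL}{}I^{\alpha}_{a+}$. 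The crucial structural fact I would invoke is the standard composition law $\prescript{RL}{}I^{\mu}_{a+}\prescript{RL}{}I^{\nu}_{a+}=\prescript{RL}{}I^{\mu+\nu}_{a+}$ for positive orders $\mu,\nu$, valid for $L^1$ functions \cite{samko}; in particular each $\prescript{RL}{}I^{\alpha n}_{a+}$ commutes with $\prescript{RL}{}I^{\alpha}_{a+}$, so the two orders of composition can be handled symmetrically.

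First I would form the composition $\prescript{AB}{}I^{\alpha}_{a+}\circ\prescript{ABR}{}D^{\alpha}_{a+}$. The constant prefactors combine to leave $\sum_{n=0}^{\infty}\lambda^n\,\prescript{RL}{}I^{\alpha n}_{a+}+\tfrac{\alpha}{1-\alpha}\prescript{RL}{}I^{\alpha}_{a+}\sum_{n=0}^{\infty}\lambda^n\,\prescript{RL}{}I^{\alpha n}_{a+}$. Applying the semigroup law termwise to the second sum turns $\prescript{RL}{}I^{\alpha}_{a+}\prescript{RL}{}I^{\alpha n}_{a+}$ into $\prescript{RL}{}I^{\alpha(n+1)}_{a+}$, and since $\tfrac{\alpha}{1-\alpha}=-\lambda$ the reindexed second sum becomes $-\sum_{m=1}^{\infty}\lambda^m\,\prescript{RL}{}I^{\alpha m}_{a+}$. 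Subtracting, every term with $m\geq1$ cancels and only the $n=0$ term of the first sum survives, giving exactly the identity operator. The reverse composition $\prescript{ABR}{}D^{\alpha}_{a+}\circ\prescript{AB}{}I^{\alpha}_{a+}$ is treated identically: here $\prescript{RL}{}I^{\alpha}_{a+}$ is applied on the right, but because the RL integrals commute the same telescoping cancellation occurs, again collapsing to $\mathrm{Id}$.

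The main technical point to discharge is the legitimacy of working term by term — that is, of pulling the single integral $\prescript{RL}{}I^{\alpha}_{a+}$ through the infinite sum and applying the semigroup law inside it. For this I would lean on the local uniform convergence of the series asserted in Theorem \ref{ABRL-series}, which permits integration of the series against the Riemann--Liouville kernel term by term, together with the fact that under the conditions of Definition \ref{ABRL-defn} each $\prescript{RL}{}I^{\alpha n}_{a+}f$ and its further RL integrals are well-defined, so that the semigroup identity genuinely applies. Once these interchanges are justified, the telescoping itself is purely formal; hence the convergence and composition bookkeeping, rather than the algebra, is the only real obstacle, and it is precisely the role played by the parenthetical proviso ``provided composition of RL differintegrals of $f$ works as it should'' attached to (\ref{ABRL-series-formula2}).
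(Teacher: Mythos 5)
Your proof is correct and takes essentially the same route as the paper's: both expand the ABR derivative via the series formula (\ref{ABRL-series-formula2}), apply the Riemann--Liouville composition law termwise (the paper cites Lemma 2.3 of \cite{kilbas}), and obtain the identity through the same telescoping cancellation in both orders of composition. Your justification of the termwise interchange via the local uniform convergence from Theorem \ref{ABRL-series} and the well-definedness of all RL integrals of an $L^1$ function likewise mirrors the paper's closing remarks in its proof.
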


\begin{proof}
First let us note that the expression (\ref{ABI}) is well-defined if and only if the RL integral $\prescript{RL}{}I_{a+}^{\alpha}f(t)$ is well-defined, which matches with our assumptions on $f$ for the $\alpha$th ABR fractional derivative to be well-defined.

The expression (\ref{ABRL-series-formula2}) can be \textit{formally} rewritten as
\begin{equation*}
\prescript{ABR}{}D^{\alpha}_{a+}f(t)=\frac{B(\alpha)}{1-\alpha}\bigg(1-\big(\tfrac{-\alpha}{1-\alpha}\big)\prescript{RL}{}I^{\alpha}_{a+}\bigg)^{-1}f(t)=\bigg(\frac{1-\alpha}{B(\alpha)}+\frac{\alpha}{B(\alpha)}\prescript{RL}{}I^{\alpha}_{a+}\bigg)^{-1}f(t),
\end{equation*}
which gives us the motivation for using (\ref{ABI}) as our definition for AB fractional integrals. Now we need to prove rigorously that
\begin{equation}
\label{ABRL-ABI-inverse}
\prescript{ABR}{}D^{\alpha}_{a+}\Big(\prescript{AB}{}I^{\alpha}_{a+}f(t)\Big)=f(t)
\end{equation}
and
\begin{equation}
\label{ABI-ABRL-inverse}
\prescript{AB}{}I^{\alpha}_{a+}\Big(\prescript{ABR}{}D^{\alpha}_{a+}f(t)\Big)=f(t)
\end{equation}
for $a,\alpha,f$ as stated. We proceed as follows.
\begin{align*}
\prescript{ABR}{}D^{\alpha}_{a+}\Big(\prescript{AB}{}I^{\alpha}_{a+}f(t)\Big)&=\prescript{ABR}{}D^{\alpha}_{a+}\Big(\tfrac{1-\alpha}{B(\alpha)}f(t)+\tfrac{\alpha}{B(\alpha)}\prescript{RL}{}I_{a+}^{\alpha}f(t)\Big) \\
&=\frac{1-\alpha}{B(\alpha)}\prescript{ABR}{}D^{\alpha}_{a+}f(t)+\frac{\alpha}{B(\alpha)}\prescript{ABR}{}D^{\alpha}_{a+}\Big(\prescript{RL}{}I_{a+}^{\alpha}f(t)\Big) \\
&=\sum_{n=0}^{\infty}\Big(\frac{-\alpha}{1-\alpha}\Big)^n\prescript{RL}{}I^{\alpha n}_{a+}f(t)+\frac{\alpha}{1-\alpha}\sum_{n=0}^{\infty}\Big(\frac{-\alpha}{1-\alpha}\Big)^n\prescript{RL}{}I^{\alpha n}_{a+}\Big(\prescript{RL}{}I_{a+}^{\alpha}f(t)\Big) \\
&=\sum_{n=0}^{\infty}\Big(\frac{-\alpha}{1-\alpha}\Big)^n\prescript{RL}{}I^{\alpha n}_{a+}f(t)-\sum_{n=0}^{\infty}\Big(\frac{-\alpha}{1-\alpha}\Big)^{n+1}\prescript{RL}{}I^{\alpha n+\alpha}_{a+}f(t) \\
&=f(t); \\
\prescript{AB}{}I^{\alpha}_{a+}\Big(\prescript{ABR}{}D^{\alpha}_{a+}f(t)\Big)&=\frac{1-\alpha}{B(\alpha)}\prescript{ABR}{}D^{\alpha}_{a+}f(t)+\frac{\alpha}{B(\alpha)}\prescript{RL}{}I_{a+}^{\alpha}\Big(\prescript{ABR}{}D^{\alpha}_{a+}f(t)\Big) \\
&=\sum_{n=0}^{\infty}\Big(\frac{-\alpha}{1-\alpha}\Big)^n\prescript{RL}{}I^{\alpha n}_{a+}f(t)+\frac{\alpha}{1-\alpha}\prescript{RL}{}I_{a+}^{\alpha}\bigg(\sum_{n=0}^{\infty}\Big(\frac{-\alpha}{1-\alpha}\Big)^n\prescript{RL}{}I^{\alpha n}_{a+}f(t)\bigg) \\
&=\sum_{n=0}^{\infty}\Big(\frac{-\alpha}{1-\alpha}\Big)^n\prescript{RL}{}I^{\alpha n}_{a+}f(t)-\sum_{n=0}^{\infty}\Big(\frac{-\alpha}{1-\alpha}\Big)^{n+1}\prescript{RL}{}I^{\alpha+\alpha n}_{a+}f(t) \\
&=f(t),
\end{align*}
where in both cases we have used the fact that Riemann--Liouville fractional integrals have nice composition properties: the RL integral of an RL integral is an RL integral of the appropriate order (see for instance Lemma 2.3 of \cite{kilbas}). Note that since $f\in L^1(a,b)$, all RL fractional integrals of $f$ are well-defined, and therefore so are the ABR derivative of $f$ (by Lemma \ref{AB-fn-spaces}), the AB integral of $f$ (by the definition (\ref{ABI})), and their compositions (by Lemma 2.3 of \cite{kilbas} again).
\end{proof}

\begin{example}
As an example to verify the result of Corollary \ref{ABRL-inverse}, we set $f(t)=1$ and calculate $\prescript{AB}{}I^{\alpha}_{a+}\Big(\prescript{ABR}{}D^{\alpha}_{a+}f(t)\Big)$ explicitly using the definitions (\ref{ABRL}) and (\ref{ABI}). Firstly, by (\ref{ABRL}) we have
\begin{align*}
\prescript{ABR}{}D^{\alpha}_{a+}(1)&=\frac{B(\alpha)}{1-\alpha}\frac{\mathrm{d}}{\mathrm{d}t}\int_a^tE_\alpha\Big(\tfrac{-\alpha}{1-\alpha}(t-x)^{\alpha}\Big)\,\mathrm{d}x \\
&=\frac{B(\alpha)}{1-\alpha}\frac{\mathrm{d}}{\mathrm{d}t}\sum_{n=0}^{\infty}\frac{\big(\tfrac{-\alpha}{1-\alpha}\big)^n}{\Gamma(n\alpha+1)}\int_a^t(t-x)^{n\alpha}\,\mathrm{d}x \\
&=\frac{B(\alpha)}{1-\alpha}\frac{\mathrm{d}}{\mathrm{d}t}\Bigg((t-a)\sum_{n=0}^{\infty}\frac{\big[\tfrac{-\alpha}{1-\alpha}(t-a)^{\alpha}\big]^n}{\Gamma(n\alpha+2)}\Bigg) \\
&=\tfrac{B(\alpha)}{1-\alpha}E_{\alpha}\Big(\tfrac{-\alpha}{1-\alpha}(t-a)^{\alpha}\Big).
\end{align*}
Then by applying (\ref{ABI}) to this, we have
\begin{align*}
\prescript{AB}{}I^{\alpha}_{a+}\Big(\prescript{ABR}{}D^{\alpha}_{a+}(1)\Big)&=\frac{1-\alpha}{B(\alpha)}\Big(\prescript{ABR}{}D^{\alpha}_{a+}(1)\Big)+\frac{\alpha}{B(\alpha)}\prescript{RL}{}I_{a+}^{\alpha}\Big(\prescript{ABR}{}D^{\alpha}_{a+}(1)\Big) \\
&=E_{\alpha}\Big(\tfrac{-\alpha}{1-\alpha}(t-a)^{\alpha}\Big)+\frac{\alpha}{B(\alpha)\Gamma(\alpha)}\int_a^t(t-x)^{\alpha-1}\tfrac{B(\alpha)}{1-\alpha}E_{\alpha}\Big(\tfrac{-\alpha}{1-\alpha}(x-a)^{\alpha}\Big)\,\mathrm{d}x \\
&=E_{\alpha}\Big(\tfrac{-\alpha}{1-\alpha}(t-a)^{\alpha}\Big)+\frac{\alpha}{(1-\alpha)\Gamma(\alpha)}\sum_{n=0}^{\infty}\frac{\big(\tfrac{-\alpha}{1-\alpha}\big)^n}{\Gamma(n\alpha+1)}(t-a)^{(n+1)\alpha}B(\alpha,n\alpha+1) \\
&=E_{\alpha}\Big(\tfrac{-\alpha}{1-\alpha}(t-a)^{\alpha}\Big)-\sum_{n=0}^{\infty}\frac{\big(\tfrac{-\alpha}{1-\alpha}\big)^{n+1}(t-a)^{(n+1)\alpha}}{\Gamma((n+1)\alpha+1)} \\
&=E_{\alpha}\Big(\tfrac{-\alpha}{1-\alpha}(t-a)^{\alpha}\Big)-\Big[E_{\alpha}\Big(\tfrac{-\alpha}{1-\alpha}(t-a)^{\alpha}\Big)-1\Big] \\
&=1,
\end{align*}
precisely as expected.
\end{example}
Note that the equation (\ref{ABI-ABRL-inverse}) should no longer be valid when $\alpha=1$, because in this case the result for ordinary derivatives would be a Newton--Leibniz rule rather than a direct inverse relation. However, we can see from examining the above example that the $\alpha$th ABR derivative does not always converge to the standard 1st derivative as $\alpha\rightarrow1$. Specifically,
\begin{align*}
\lim_{\alpha\rightarrow1}\Big(\prescript{ABR}{}D^{\alpha}_{a+}(1)\Big)&=\lim_{\alpha\rightarrow1}\Big(\tfrac{B(\alpha)}{1-\alpha}E_{\alpha}\Big(\tfrac{-\alpha}{1-\alpha}(t-a)^{\alpha}\Big)\Big) \\
&=\lim_{\alpha\rightarrow1}\Big(\tfrac{1}{1-\alpha}\exp\Big(\tfrac{-(t-a)}{1-\alpha}\Big)\Big)=\delta(t-a),
\end{align*}
the Dirac delta function. This is equal to zero (the 1st derivative of $f(t)$ in this case) almost everywhere, but the blowup at the limiting point $t=a$ changes the behaviour of the integral. In fact, we can observe the same behaviour with the classical Riemann--Liouville derivative: in that model, the $\alpha$th derivative of $f(t)=1$ is $\frac{(t-a)^{-\alpha}}{\Gamma(1-\alpha)}$, which again blows up at the limiting point $t=a$. In both models, the double limit as $\alpha\rightarrow1$ and $t\rightarrow a$ needs to be handled with care. For the AB model, as $\alpha\rightarrow1$, the ABR derivative converges to the standard derivative only \textit{almost} everywhere (everywhere except $t=a$), and thus the Newton--Leibniz formula at $\alpha=1$ cannot be derived as a limit of the corresponding result (\ref{ABI-ABRL-inverse}) for $\alpha<1$.

In this way, we can understand the apparent discrepancy between the direct inverse relationship of the ABR derivative and AB integral and the Newton--Leibniz relationship of the standard 1st-order derivative and integral. See, however, Corollary \ref{ABC-NL} below for a valid Newton--Leibniz relationship between the ABC derivative and the AB integral.

\begin{coroll}
The AB integral operators and ABR differential operators form a commutative family of differintegral operators:
\begin{align*}
\prescript{ABR}{}D^{\alpha}_{a+}\Big(\prescript{ABR}{}D^{\beta}_{a+}f(t)\Big)&=\prescript{ABR}{}D^{\beta}_{a+}\Big(\prescript{ABR}{}D^{\alpha}_{a+}f(t)\Big) \\
\prescript{AB}{}I^{\alpha}_{a+}\Big(\prescript{AB}{}I^{\beta}_{a+}f(t)\Big)&=\prescript{AB}{}I^{\beta}_{a+}\Big(\prescript{AB}{}I^{\alpha}_{a+}f(t)\Big) \\
\prescript{ABR}{}D^{\alpha}_{a+}\Big(\prescript{AB}{}I^{\beta}_{a+}f(t)\Big)&=\prescript{AB}{}I^{\beta}_{a+}\Big(\prescript{ABR}{}D^{\alpha}_{a+}f(t)\Big) \\
\end{align*}
for $\alpha,\beta\in(0,1)$ and $a,f$ satisfying the conditions from Definition \ref{ABRL-defn}.
\end{coroll}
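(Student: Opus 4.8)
The plan is to reduce all three identities to the single structural fact that Riemann--Liouville integrals of different orders commute with one another. By the semigroup (composition) property $\prescript{RL}{}I^{\mu}_{a+}\prescript{RL}{}I^{\nu}_{a+}=\prescript{RL}{}I^{\mu+\nu}_{a+}$ (Lemma 2.3 of \cite{kilbas}), already invoked in Corollary \ref{ABRL-inverse}, we have $\prescript{RL}{}I^{\mu}_{a+}\prescript{RL}{}I^{\nu}_{a+}=\prescript{RL}{}I^{\nu}_{a+}\prescript{RL}{}I^{\mu}_{a+}$ for all $\mu,\nu\geq0$. Both families of operators are built entirely out of such RL integrals: the AB integral is the finite combination (\ref{ABI}) involving $\prescript{RL}{}I^{0}_{a+}=\mathrm{Id}$ and $\prescript{RL}{}I^{\alpha}_{a+}$, while the ABR derivative is the locally uniformly convergent series (\ref{ABRL-series-formula2}) in the operators $\prescript{RL}{}I^{\alpha n}_{a+}$. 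So in each identity the two sides, once expanded, are series or combinations of mutually commuting RL integrals, and the task reduces to checking that the expansions coincide.

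First I would dispose of the integral--integral case, which is purely algebraic. Expanding $\prescript{AB}{}I^{\alpha}_{a+}\big(\prescript{AB}{}I^{\beta}_{a+}f\big)$ using (\ref{ABI}) produces four terms, with coefficients multiplying the operators $\mathrm{Id}$, $\prescript{RL}{}I^{\beta}_{a+}$, $\prescript{RL}{}I^{\alpha}_{a+}$, and $\prescript{RL}{}I^{\alpha}_{a+}\prescript{RL}{}I^{\beta}_{a+}=\prescript{RL}{}I^{\alpha+\beta}_{a+}$; the resulting expression is manifestly symmetric in $\alpha$ and $\beta$, which gives the claim. The derivative--integral case is then handled by expanding the AB integral via (\ref{ABI}), whereupon everything reduces to the single commutation $\prescript{ABR}{}D^{\alpha}_{a+}\prescript{RL}{}I^{\beta}_{a+}=\prescript{RL}{}I^{\beta}_{a+}\prescript{ABR}{}D^{\alpha}_{a+}$; inserting the series (\ref{ABRL-series-formula2}) and moving the (continuous, order-preserving) operator $\prescript{RL}{}I^{\beta}_{a+}$ through the sum turns both sides into $\tfrac{B(\alpha)}{1-\alpha}\sum_{n}\big(\tfrac{-\alpha}{1-\alpha}\big)^{n}\prescript{RL}{}I^{\alpha n+\beta}_{a+}f$ by the semigroup law.

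For the derivative--derivative case I would substitute the series (\ref{ABRL-series-formula2}) for both operators and expand the composition as a double series,
\[
\prescript{ABR}{}D^{\alpha}_{a+}\big(\prescript{ABR}{}D^{\beta}_{a+}f\big)=\frac{B(\alpha)B(\beta)}{(1-\alpha)(1-\beta)}\sum_{m,n\geq0}\Big(\tfrac{-\alpha}{1-\alpha}\Big)^{m}\Big(\tfrac{-\beta}{1-\beta}\Big)^{n}\prescript{RL}{}I^{\alpha m+\beta n}_{a+}f,
\]
which is symmetric under the simultaneous interchange $(\alpha,m)\leftrightarrow(\beta,n)$ and hence is unchanged by swapping $\alpha$ and $\beta$; this is exactly the asserted commutativity.

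The step I expect to be the main obstacle is justifying these manipulations rigorously, in particular applying the operator $\prescript{ABR}{}D^{\alpha}_{a+}$ term-by-term to the already-infinite series representing $\prescript{ABR}{}D^{\beta}_{a+}f$ and then rearranging the resulting double series. This rests on the local uniform convergence furnished by Theorem \ref{ABRL-series}, together with absolute convergence of the double sum (so that Fubini-type rearrangement applies) and the fact that each $\prescript{RL}{}I^{\gamma}_{a+}$ maps the relevant function space continuously into itself, so that it commutes with the locally uniform limit. There is also a standing well-definedness requirement, exactly as in Corollary \ref{ABRL-inverse}: the iterated operators only make sense if $f$ is regular enough that every RL integral appearing, and all their compositions, are well-defined. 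I would therefore assume throughout, as there, that $f\in L^{1}(a,b)$, which guarantees this via Lemma \ref{AB-fn-spaces} and Lemma 2.3 of \cite{kilbas}.
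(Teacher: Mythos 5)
Your proposal is correct and takes essentially the same route as the paper: expand the ABR derivatives via the series formula (\ref{ABRL-series-formula2}) and the AB integrals via (\ref{ABI}), then conclude each identity from the symmetry in $\alpha$ and $\beta$ of the resulting combination of commuting Riemann--Liouville integrals (passing the AB-integral operator through the locally uniformly convergent series in the mixed case). Your remarks on justifying the term-by-term manipulations correspond to the paper's own appeal to the local uniform convergence from Theorem \ref{ABRL-series} and the composition property in Lemma 2.3 of \cite{kilbas}.
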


\begin{proof}
For the first identity, we use equation (\ref{ABRL-series-formula2}) to get
\begin{align*}
\prescript{ABR}{}D^{\alpha}_{a+}\Big(\prescript{ABR}{}D^{\beta}_{a+}f(t)\Big)&=\frac{B(\alpha)}{1-\alpha}\sum_{n=0}^{\infty}\Big(\frac{-\alpha}{1-\alpha}\Big)^n\prescript{RL}{}I^{\alpha n}_{a+}\Bigg[\frac{B(\beta)}{1-\beta}\sum_{m=0}^{\infty}\Big(\frac{-\beta}{1-\beta}\Big)^m\prescript{RL}{}I^{\beta m}_{a+}f(t)\Bigg] \\
&=\frac{B(\alpha)B(\beta)}{(1-\alpha)(1-\beta)}\sum_{m,n}\frac{(-\alpha)^n(-\beta)^m}{(1-\alpha)^n(1-\beta)^m}\prescript{RL}{}I^{\alpha n+\beta m}_{a+}f(t).
\end{align*}
This expression is symmetric in $\alpha$ and $\beta$ -- it remains the same if these two variables are swapped -- so it must be equal to $\prescript{ABR}{}D^{\beta}_{a+}\big(\prescript{ABR}{}D^{\alpha}_{a+}f(t)\big)$, as required.

For the second identity, we use equation (\ref{ABI}) to get
\begin{equation*}
\prescript{AB}{}I^{\alpha}_{a+}\Big(\prescript{AB}{}I^{\beta}_{a+}f(t)\Big)=\bigg(\frac{1-\alpha}{B(\alpha)}+\frac{\alpha}{B(\alpha)}\prescript{RL}{}I^{\alpha}_{a+}\bigg)\bigg(\frac{1-\beta}{B(\beta)}+\frac{\beta}{B(\beta)}\prescript{RL}{}I^{\beta}_{a+}\bigg)f(t),
\end{equation*}
which again is symmetric in $\alpha$ and $\beta$, since the Riemann--Liouville fractional integral operators commute (this is another consequence of Lemma 2.3 in \cite{kilbas}).

For the third identity, we use equations (\ref{ABRL-series-formula2}) and (\ref{ABI}) together:
\begin{align*}
\prescript{ABR}{}D^{\alpha}_{a+}\Big(\prescript{AB}{}I^{\beta}_{a+}f(t)\Big)&=\frac{B(\alpha)}{1-\alpha}\sum_{n=0}^{\infty}\Big(\frac{-\alpha}{1-\alpha}\Big)^n\prescript{RL}{}I^{\alpha n}_{a+}\Bigg[\bigg(\frac{1-\beta}{B(\beta)}+\frac{\beta}{B(\beta)}\prescript{RL}{}I^{\beta}_{a+}\bigg)f(t)\Bigg] \\
&=\bigg(\frac{1-\beta}{B(\beta)}+\frac{\beta}{B(\beta)}\prescript{RL}{}I^{\beta}_{a+}\bigg)\Bigg[\frac{B(\alpha)}{1-\alpha}\sum_{n=0}^{\infty}\Big(\frac{-\alpha}{1-\alpha}\Big)^n\prescript{RL}{}I^{\alpha n}_{a+}f(t)\Bigg] \\
&=\prescript{AB}{}I^{\beta}_{a+}\Big(\prescript{ABR}{}D^{\alpha}_{a+}f(t)\Big),
\end{align*}
again using the fact that Riemann--Liouville fractional integral operators are commutative, as well as the local uniform convergence of the series in (\ref{ABRL-series-formula2}).
\end{proof}

Note that both of the above corollaries could also be proved using the Laplace transform formula (\ref{ABRL-Laplace-old}) which was established in \cite{atangana}. But the advantage of the new approach, proving them directly from Theorem \ref{ABRL-series}, is that it works for \textit{all} functions $f$ such that the AB fractional derivatives and integrals are well-defined, not just those $f$ which have well-defined Laplace transforms. The proofs are more direct, without the need to pass back and forth between the time domain and the frequency domain.

The same method used to prove Theorem \ref{ABRL-series} also works to establish the following analogous result for ABC fractional derivatives.

\begin{thm}
\label{ABC-series}
The ABC fractional derivative can be expressed as
\begin{equation}
\label{ABC-series-formula}
\prescript{ABC}{}D^{\alpha}_{a+}f(t)=\frac{B(\alpha)}{1-\alpha}\sum_{n=0}^{\infty}\Big(\frac{-\alpha}{1-\alpha}\Big)^n\prescript{RL}{}I^{\alpha n+1}_{a+}f'(t),
\end{equation}
this series converging locally uniformly in $t$ for any $a,\alpha,f$ satisfying the conditions from Definition \ref{ABC-defn}.
\end{thm}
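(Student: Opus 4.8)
The plan is to follow verbatim the strategy used for Theorem \ref{ABRL-series}, exploiting the fact that the ABC derivative carries its differentiation inside the integral (acting on $f$) rather than outside, which in fact makes the argument slightly cleaner. First I would substitute the power series (\ref{ML}) for the Mittag-Leffler kernel into the defining integral (\ref{ABC}), writing
\[
\prescript{ABC}{}D^{\alpha}_{a+}f(t)=\frac{B(\alpha)}{1-\alpha}\int_a^t f'(x)\sum_{n=0}^{\infty}\frac{(-\alpha)^n(t-x)^{\alpha n}}{(1-\alpha)^n\,\Gamma(\alpha n+1)}\,\mathrm{d}x.
\]

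Next I would interchange the summation and the integration. This is the one step that requires justification, and it is legitimate because the series (\ref{ML}) converges locally uniformly on the whole complex plane, hence uniformly in $x$ on the compact interval $[a,t]$, while by the hypotheses of Definition \ref{ABC-defn} the factor $f'$ lies in $L^1(a,b)$; uniform convergence of the integrand against the finite measure $|f'(x)|\,\mathrm{d}x$ then permits term-by-term integration.

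Having swapped the order, each summand becomes
\[
\Big(\frac{-\alpha}{1-\alpha}\Big)^n\frac{1}{\Gamma(\alpha n+1)}\int_a^t(t-x)^{\alpha n}f'(x)\,\mathrm{d}x,
\]
and I would recognize this integral, via the Riemann--Liouville definition (\ref{RL-int-defn}) taken at order $\alpha n+1$, as precisely $\prescript{RL}{}I^{\alpha n+1}_{a+}f'(t)$, since the exponent satisfies $(\alpha n+1)-1=\alpha n$ and $\Gamma(\alpha n+1)$ is exactly the matching normalising constant. Summing over $n$ then yields the claimed formula (\ref{ABC-series-formula}) directly, with no outer $\tfrac{\mathrm{d}}{\mathrm{d}t}$ to commute past the infinite sum, which is the essential simplification relative to the ABR proof.

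Finally, for the local uniform convergence of the resulting series in $t$, I would bound $\big|\prescript{RL}{}I^{\alpha n+1}_{a+}f'(t)\big|$ on any compact subinterval $[a,T]$ by $\tfrac{(T-a)^{\alpha n}}{\Gamma(\alpha n+1)}\,\|f'\|_{L^1(a,b)}$, using $(t-x)^{\alpha n}\leq(T-a)^{\alpha n}$, so that the general term is dominated by $\big(\tfrac{\alpha}{1-\alpha}\big)^n\tfrac{(T-a)^{\alpha n}}{\Gamma(\alpha n+1)}$, whose sum is a convergent Mittag-Leffler-type majorant. The Weierstrass $M$-test then gives uniform convergence on compacta. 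I do not anticipate any genuine obstacle: the only point needing care is the interchange of sum and integral, and that is handled exactly as in Theorem \ref{ABRL-series}.
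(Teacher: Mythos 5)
Your proposal is correct and takes essentially the same route as the paper, which proves Theorem \ref{ABC-series} by remarking that the method of Theorem \ref{ABRL-series} applies verbatim: expand the Mittag-Leffler kernel via (\ref{ML}), interchange sum and integral by locally uniform convergence, and identify each term as $\prescript{RL}{}I^{\alpha n+1}_{a+}f'(t)$. Your added justifications (the $L^1$ pairing argument for the interchange and the Weierstrass $M$-test majorant for local uniform convergence in $t$) are sound and merely make explicit what the paper leaves implicit.
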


And just as we did above for ABR derivatives, we can use this result to derive the following expression for Laplace transforms of ABC derivatives:
\begin{equation}
\label{ABC-Laplace}
\widehat{\prescript{ABC}{}D^{\alpha}_{a+}f}(s)=\frac{B(\alpha)}{1-\alpha}\cdot\frac{s^{\alpha-1}}{s^{\alpha}+\tfrac{\alpha}{1-\alpha}}\Big(s\hat{f}(s)-f(0)\Big)
\end{equation}
for any function $f$ with well-defined Laplace transform $\hat{f}$ and ABC fractional derivative $\prescript{ABC}{}D^{\alpha}_{a+}f(t)$. This was equation (10) in \cite{atangana}.

The following result was already shown in \cite{djida} using Laplace transforms, but we can now prove it in a much more elementary way, with fewer required assumptions on the function $f$, by using the series formula from Theorem \ref{ABC-series}.
\begin{coroll}
\label{ABC-NL}
The AB fractional integral and the ABC fractional derivative satisfy the following Newton--Leibniz formula:
\begin{equation}
\label{ABC-NL-formula}
\prescript{AB}{}I^{\alpha}_{a+}\Big(\prescript{ABC}{}D^{\alpha}_{a+}f(t)\Big)=f(t)-f(a),
\end{equation}
valid whenever $a,\alpha,f$ satisfy the conditions from Definition \ref{ABC-defn}.
\end{coroll}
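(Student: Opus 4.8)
The plan is to imitate the proof of Corollary \ref{ABRL-inverse}, substituting the series representation of the ABC derivative from Theorem \ref{ABC-series} into the definition (\ref{ABI}) of the AB integral and then collapsing the resulting sum by telescoping. First I would write
\[
\prescript{AB}{}I^{\alpha}_{a+}\Big(\prescript{ABC}{}D^{\alpha}_{a+}f(t)\Big)=\frac{1-\alpha}{B(\alpha)}\prescript{ABC}{}D^{\alpha}_{a+}f(t)+\frac{\alpha}{B(\alpha)}\prescript{RL}{}I_{a+}^{\alpha}\Big(\prescript{ABC}{}D^{\alpha}_{a+}f(t)\Big),
\]
and then feed the series formula (\ref{ABC-series-formula}) into each of the two terms.

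Next, in the first term the prefactors cancel to leave $\sum_{n\geq0}\big(\tfrac{-\alpha}{1-\alpha}\big)^n\prescript{RL}{}I^{\alpha n+1}_{a+}f'(t)$. In the second term, the prefactor $\tfrac{\alpha}{B(\alpha)}\cdot\tfrac{B(\alpha)}{1-\alpha}=\tfrac{\alpha}{1-\alpha}$ equals $-\big(\tfrac{-\alpha}{1-\alpha}\big)$, and I would use the composition law for Riemann--Liouville integrals (Lemma 2.3 of \cite{kilbas}) to write $\prescript{RL}{}I^{\alpha}_{a+}\prescript{RL}{}I^{\alpha n+1}_{a+}f'=\prescript{RL}{}I^{\alpha(n+1)+1}_{a+}f'$; after reindexing $m=n+1$ this term becomes $-\sum_{m\geq1}\big(\tfrac{-\alpha}{1-\alpha}\big)^{m}\prescript{RL}{}I^{\alpha m+1}_{a+}f'(t)$. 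Adding the two pieces, every term with $n\geq1$ cancels and only the $n=0$ contribution $\prescript{RL}{}I^{1}_{a+}f'(t)$ survives.

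Finally, I would identify $\prescript{RL}{}I^{1}_{a+}f'(t)=\int_a^t f'(x)\,\mathrm{d}x=f(t)-f(a)$ by the fundamental theorem of calculus (recall $f'\in L^1$ under the hypotheses of Definition \ref{ABC-defn}), which is exactly the claimed right-hand side. The main thing to verify is not the algebra but its legitimacy: interchanging $\prescript{RL}{}I^{\alpha}_{a+}$ with the infinite sum, and reindexing and telescoping a series of RL integrals. This is justified by the local uniform convergence of the series (\ref{ABC-series-formula}) guaranteed by Theorem \ref{ABC-series}, together with the composition property of RL integrals, exactly as in the proof of Corollary \ref{ABRL-inverse}. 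So the only genuinely delicate point is ensuring that each term and each composition is well-defined, which holds because $f'\in L^1(a,b)$ renders all the integrals $\prescript{RL}{}I^{\alpha n+1}_{a+}f'$ well-defined.
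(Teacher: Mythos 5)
Your proposal is correct and follows essentially the same route as the paper's own proof: substitute the series formula (\ref{ABC-series-formula}) into the definition (\ref{ABI}), use the Riemann--Liouville composition property to reindex, telescope to leave $\prescript{RL}{}I^{1}_{a+}f'(t)$, and conclude by the classical Newton--Leibniz formula. Your added remarks on justifying the interchange of $\prescript{RL}{}I^{\alpha}_{a+}$ with the sum via the locally uniform convergence from Theorem \ref{ABC-series} match the implicit justification in the paper, so there is nothing to correct.
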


\begin{proof}
By the definition (\ref{ABI}) of the AB integral, together with the series formula (\ref{ABC-series-formula}) for the ABC derivative, we have
\begin{align*}
\prescript{AB}{}I^{\alpha}_{a+}\Big(\prescript{ABC}{}D^{\alpha}_{a+}f(t)\Big)&=\frac{1-\alpha}{B(\alpha)}\prescript{ABC}{}D^{\alpha}_{a+}f(t)+\frac{\alpha}{B(\alpha)}\prescript{RL}{}I_{a+}^{\alpha}\Big(\prescript{ABC}{}D^{\alpha}_{a+}f(t)\Big) \\
&=\sum_{n=0}^{\infty}\big(\tfrac{-\alpha}{1-\alpha}\big)^n\prescript{RL}{}I^{\alpha n+1}_{a+}f'(t)+\frac{\alpha}{1-\alpha}\prescript{RL}{}I_{a+}^{\alpha}\bigg(\sum_{n=0}^{\infty}\big(\tfrac{-\alpha}{1-\alpha}\big)^n\prescript{RL}{}I^{\alpha n+1}_{a+}f'(t)\bigg) \\
&=\sum_{n=0}^{\infty}\big(\tfrac{-\alpha}{1-\alpha}\big)^n\prescript{RL}{}I^{\alpha n+1}_{a+}f'(t)-\sum_{n=0}^{\infty}\big(\tfrac{-\alpha}{1-\alpha}\big)^{n+1}\prescript{RL}{}I^{\alpha+\alpha n+1}_{a+}f'(t) \\
&=\prescript{RL}{}I^{1}_{a+}f'(t)=f(t)-f(a),
\end{align*}
by the standard Newton--Leibniz formula.
\end{proof}

This is significant because the Newton--Leibniz formula is a required element in the derivation of a theory of fractional vector calculus \cite{tarasov2}. Thus, knowing that a Newton--Leibniz analogue holds for ABC derivatives may enable us to construct a theory of fractional vector calculus in the AB model too.

\section{Some ordinary differential equations}
\label{sec-ode}

\subsection{Linear ODEs of Riemann--Liouville type}
As a basic first case, consider the following simple fractional ODE:
\begin{equation}
\label{ODE1}
\prescript{ABR}{}D^{\alpha}_{0+}f(t)-\tfrac{B(\alpha)}{1-\alpha}f(t)=g(t)
\end{equation}
where $f$ and $g$ are Laplace-transformable functions and $\alpha\in(0,1)$. Taking Laplace transforms of (\ref{ODE1}), using the formula (\ref{ABRL-Laplace-new}), yields $$\tfrac{B(\alpha)}{1-\alpha}\bigg(\frac{s^{\alpha}}{s^{\alpha}+\tfrac{\alpha}{1-\alpha}}\bigg)\hat{f}(s)-\prescript{ABR}{}D^{\alpha}_{0+}\circ\prescript{RL}{}I_{0+}f(0)-\tfrac{B(\alpha)}{1-\alpha}\hat{f}(s)=\hat{g}(s),$$ and therefore $$\tfrac{B(\alpha)}{1-\alpha}\bigg(\frac{\tfrac{-\alpha}{1-\alpha}}{s^{\alpha}+\tfrac{\alpha}{1-\alpha}}\bigg)\hat{f}(s)=\hat{g}(s)+\prescript{ABR}{}D^{\alpha}_{0+}\circ\prescript{RL}{}I_{0+}f(0).$$ Thus the Laplace-transformed solution $\hat{f}(s)$ is given by $$\hat{f}(s)=\tfrac{-(1-\alpha)^2}{\alpha B(\alpha)}\Big(s^{\alpha}+\tfrac{\alpha}{1-\alpha}\Big)\hat{h}(s),$$ where the function $\hat{h}(s)$ is defined by $$\hat{h}(s)=\hat{g}(s)+\prescript{ABR}{}D^{\alpha}_{0+}\circ\prescript{RL}{}I_{0+}f(0)$$ or equivalently
\begin{equation}
\label{ODE1-h-defn}
h(t)=g(t)+\Big[\prescript{ABR}{}D^{\alpha}_{0+}\circ\prescript{RL}{}I_{0+}f(0)\Big]\delta(t).
\end{equation}

Now the Laplace transform of the Riemann--Liouville fractional derivative $\prescript{RL}{}D^{\alpha}_{0+}h(t)$ is $s^{\alpha}\hat{h}(s)-\prescript{RL}{}I^{1-\alpha}_{0+}h(0)$, so the Laplace transform of $\prescript{RL}{}D^{\alpha}_{0+}h(t)+\prescript{RL}{}I^{1-\alpha}_{0+}h(0)\delta(t)$ is simply $s^{\alpha}\hat{h}(s)$. Thus the unique Laplace-transformable solution to equation (\ref{ODE1}) is
\begin{equation*}
f(t)=\tfrac{-(1-\alpha)^2}{\alpha B(\alpha)}\Big(\prescript{RL}{}D^{\alpha}_{0+}h(t)+\prescript{RL}{}I^{1-\alpha}_{0+}h(0)\delta(t)\Big)-\tfrac{1-\alpha}{B(\alpha)}h(t),
\end{equation*}
where $h$ is defined by (\ref{ODE1-h-defn}) and therefore depends only -- and linearly -- on $g$ and initial values of $f$.

More generally, let us consider the following fractional ODE, inhomogeneous with arbitrary constant coefficients:
\begin{equation}
\label{ODE2}
\prescript{ABR}{}D^{\alpha}_{0+}f(t)-Af(t)=g(t)
\end{equation}
where $f$ and $g$ are Laplace-transformable functions, $\alpha\in(0,1)$, and $A$ is a constant. Write $k\coloneqq\tfrac{1-\alpha}{B(\alpha)}A$ for ease of notation, so that the equation (\ref{ODE2}) becomes $$\prescript{ABR}{}D^{\alpha}_{0+}f(t)-\tfrac{B(\alpha)}{1-\alpha}kf(t)=g(t).$$ Then take Laplace transforms of this equation, using the formula (\ref{ABRL-Laplace-new}), to get $$\frac{B(\alpha)}{1-\alpha}\Bigg(\frac{s^{\alpha}-k[s^{\alpha}+\tfrac{\alpha}{1-\alpha}]}{s^{\alpha}+\tfrac{\alpha}{1-\alpha}}\Bigg)\hat{f}(s)-\prescript{ABR}{}D^{\alpha}_{0+}\circ\prescript{RL}{}I_{0+}f(0)=\hat{g}(s).$$ Thus the Laplace-transformed solution $\hat{f}(s)$ can be written as:
\begin{align*}
\hat{f}(s)&=\frac{1-\alpha}{B(\alpha)}\Big(s^{\alpha}+\tfrac{\alpha}{1-\alpha}\Big)\Big((1-k)s^{\alpha}-\tfrac{k\alpha}{1-\alpha}\Big)^{-1}\Big(\hat{g}(s)+\prescript{ABR}{}D^{\alpha}_{0+}\circ\prescript{RL}{}I_{0+}f(0)\Big) \\
&=\frac{1-\alpha}{(1-k)B(\alpha)}\Big(1+\tfrac{\alpha}{1-\alpha}s^{-\alpha}\Big)\Big(1-\tfrac{k\alpha}{(1-k)(1-\alpha)}s^{-\alpha}\Big)^{-1}\hat{h}(s) \\
&=\frac{1-\alpha}{(1-k)B(\alpha)}\Big(1+\tfrac{k\alpha}{(1-k)(1-\alpha)}s^{-\alpha}+\tfrac{k^2\alpha^2}{(1-k)^2(1-\alpha)^2}s^{-2\alpha}+\dots\Big)\Big(1+\tfrac{\alpha}{1-\alpha}s^{-\alpha}\Big)\hat{h}(s),
\end{align*}
where $h$ is defined by (\ref{ODE1-h-defn}) as before.

From here, a simple way of proceeding yields
\begin{equation*}
\hat{f}(s)=\tfrac{1}{k}\Big(1+\tfrac{k\alpha}{(1-k)(1-\alpha)}s^{-\alpha}+\tfrac{k^2\alpha^2}{(1-k)^2(1-\alpha)^2}s^{-2\alpha}+\dots\Big)\widehat{\prescript{AB}{}I^{\alpha}_{0+}h}(s)
\end{equation*}
and therefore
\begin{align}
\nonumber f(t)&=\frac{1}{1-k}\frac{\mathrm{d}}{\mathrm{d}t}\int_0^tE_{\alpha}\Big(\tfrac{k\alpha}{(1-k)(1-\alpha)}(t-x)^{\alpha}\Big)\prescript{AB}{}I^{\alpha}_{0+}h(x)\,\mathrm{d}x \\
\label{ODE2-soln1}
&=\frac{1}{1-k}\prescript{AB}{}I^{\alpha}_{0+}h(t)+\frac{1}{1-k}\int_0^t\frac{\mathrm{d}}{\mathrm{d}t}E_{\alpha}\Big(\tfrac{k\alpha}{(1-k)(1-\alpha)}(t-x)^{\alpha}\Big)\prescript{AB}{}I^{\alpha}_{0+}h(x)\,\mathrm{d}x.
\end{align}
Alternatively, a slightly more interesting way of proceeding yields
\begin{align*}
\hat{f}(s)&=\frac{1-\alpha}{(1-k)B(\alpha)}\bigg(1+\Big[\tfrac{\alpha}{1-\alpha}+\tfrac{k\alpha}{(1-k)(1-\alpha)}\Big]s^{-\alpha}+\Big[\tfrac{k\alpha^2}{(1-k)(1-\alpha)^2}+\tfrac{k^2\alpha^2}{(1-k)^2(1-\alpha)^2}\Big]s^{-2\alpha}+\dots\bigg)\hat{h}(s) \\
\nonumber &=\frac{1-\alpha}{(1-k)B(\alpha)}\Big(1+\tfrac{\alpha}{(1-k)(1-\alpha)}s^{-\alpha}+\tfrac{k\alpha^2}{(1-k)^2(1-\alpha)^2}s^{-2\alpha}+\dots\Big)\hat{h}(s) \\
\nonumber &=\frac{1-\alpha}{k(1-k)B(\alpha)}\Big((k-1)+1+\tfrac{k\alpha}{(1-k)(1-\alpha)}s^{-\alpha}+\tfrac{k^2\alpha^2}{(1-k)^2(1-\alpha)^2}s^{-2\alpha}+\dots\Big)\hat{h}(s)
\end{align*}
and therefore
\begin{align}
\nonumber f(t)&=\frac{1-\alpha}{-kB(\alpha)}h(t)+\frac{1-\alpha}{k(1-k)B(\alpha)}\frac{\mathrm{d}}{\mathrm{d}t}\int_0^tE_{\alpha}\Big(\tfrac{k\alpha}{(1-k)(1-\alpha)}(t-x)^{\alpha}\Big)h(x)\,\mathrm{d}x \\
\nonumber &=-\frac{1}{A}h(t)+\frac{1}{A(1-k)}\Bigg[h(t)+\int_0^t\frac{\mathrm{d}}{\mathrm{d}t}E_{\alpha}\Big(\tfrac{k\alpha}{(1-k)(1-\alpha)}(t-x)^{\alpha}\Big)h(x)\,\mathrm{d}x\Bigg] \\
\label{ODE2-soln2}
&=\frac{k}{A(1-k)}h(t)+\frac{1}{A(1-k)}\int_0^t\frac{\mathrm{d}}{\mathrm{d}t}E_{\alpha}\Big(\tfrac{k\alpha}{(1-k)(1-\alpha)}(t-x)^{\alpha}\Big)h(x)\,\mathrm{d}x.
\end{align}
So the (equivalent) expressions (\ref{ODE2-soln1}) and (\ref{ODE2-soln2}), where the function $h$ is defined by (\ref{ODE1-h-defn}), provide the unique Laplace-transformable solution to the ODE (\ref{ODE2}).

By applying this argument multiple times, we can solve any linear sequential fractional ODE of the form
\begin{equation}
\label{ODE3}
\Big(\prescript{ABR}{}D^{\alpha}_{0+}-A\Big)\Big(\prescript{ABR}{}D^{\beta}_{0+}-B\Big)\dots\Big(\prescript{ABR}{}D^{\gamma}_{0+}-C\Big)f(t)=g(t),
\end{equation}
where $g$ is a Laplace-transformable function and $\alpha,\beta,\dots,\gamma\in(0,1)$ and $A,B,\dots,C$ are constants. In each case, we can construct a nested integral formula for the unique Laplace-transformable solution $f$ to the ODE (\ref{ODE3}).

\begin{example}
\label{ODE-example1}
As an example application of this method, let us consider the following sequential fractional ODE:
\begin{equation}
\label{ABR-example}
\prescript{ABR}{}D^{1/2}_{0+}\circ\prescript{ABR}{}D^{1/2}_{0+}f(t)=f(t)+g(t),
\end{equation}
where we take the normalisation function $B(\alpha)$ to be identically $1$. This ODE can be rewritten as \[\Big(\prescript{ABR}{}D^{1/2}_{0+}-1\Big)\Big(\prescript{ABR}{}D^{1/2}_{0+}+1\Big)f(t)=g(t),\] which we can split into a coupled pair of ODEs as follows:
\begin{align}
\label{ABR-example1}
\prescript{ABR}{}D^{1/2}_{0+}j(t)-j(t)&=g(t); \\
\label{ABR-example2}
\prescript{ABR}{}D^{1/2}_{0+}f(t)+f(t)&=j(t).
\end{align}
Note that both (\ref{ABR-example1}) and (\ref{ABR-example2}) are ODEs in the form of (\ref{ODE2}). In the first case, we have $\alpha=1/2$, $A=1$, and therefore $k=1/2$, so the formula (\ref{ODE2-soln2}) becomes \[j(t)=g(t)+2\int_0^t\frac{\mathrm{d}}{\mathrm{d}t}E_{1/2}\big((t-x)^{1/2}\big)g(x)\,\mathrm{d}x.\] In the second case, we have $\alpha=1/2$, $A=-1$, and therefore $k=-1/2$, so the formula (\ref{ODE2-soln2}) becomes \[f(t)=\frac{1}{3}j(t)-\frac{2}{3}\int_0^t\frac{\mathrm{d}}{\mathrm{d}t}E_{1/2}\Big(-\tfrac{1}{3}(t-x)^{1/2}\Big)j(x)\,\mathrm{d}x.\] Thus the solution to the sequential ODE (\ref{ABR-example}) is:
\begin{multline*}
f(t)=\frac{1}{3}g(t)+\frac{2}{3}\int_0^t\frac{\mathrm{d}}{\mathrm{d}t}E_{1/2}\big((t-x)^{1/2}\big)g(x)\,\mathrm{d}x-\frac{2}{3}\int_0^t\frac{\mathrm{d}}{\mathrm{d}t}E_{1/2}\Big(-\tfrac{1}{3}(t-x)^{1/2}\Big)g(x)\,\mathrm{d}x \\ -\frac{2}{3}\int_0^t\frac{\mathrm{d}}{\mathrm{d}t}E_{1/2}\Big(-\tfrac{1}{3}(t-x)^{1/2}\Big)\Bigg[2\int_0^x\frac{\mathrm{d}}{\mathrm{d}x}E_{1/2}\big((x-y)^{1/2}\big)g(y)\,\mathrm{d}y\Bigg]\,\mathrm{d}x.
\end{multline*}
\end{example}

\subsection{Linear ODEs of Caputo type}
Now let us consider the same types of ODEs but with derivatives of Caputo type instead of Riemann--Liouville type. The solution in this case runs in much the same way as before, except that now the initial values are slightly easier to deal with.

As a basic first case, let us consider the following simple fractional ODE, analogous to (\ref{ODE1}):
\begin{equation}
\label{ODE4}
\prescript{ABC}{}D^{\alpha}_{0+}f(t)-\tfrac{B(\alpha)}{1-\alpha}f(t)=g(t)
\end{equation}
where $f$ and $g$ are Laplace-transformable functions and $\alpha\in(0,1)$. Taking Laplace transforms, we get
\begin{align*}
(\ref{ODE1})&\Leftrightarrow\frac{B(\alpha)}{1-\alpha}\bigg(\frac{s^{\alpha-1}}{s^{\alpha}+\tfrac{\alpha}{1-\alpha}}\bigg)\Big(s\hat{f}(s)-f(0)\Big)-\frac{B(\alpha)}{1-\alpha}\hat{f}(s)=\hat{g}(s) \\
&\Leftrightarrow\frac{B(\alpha)}{1-\alpha}\cdot\frac{1}{s^{\alpha}+\tfrac{\alpha}{1-\alpha}}\Big(s^{\alpha}\hat{f}(s)-s^{\alpha-1}f(0)-\big(s^{\alpha}+\tfrac{\alpha}{1-\alpha}\big)\hat{f}(s)\Big)=\hat{g}(s) \\
&\Leftrightarrow\tfrac{B(\alpha)}{1-\alpha}\Big(\tfrac{-\alpha}{1-\alpha}\hat{f}(s)-s^{\alpha-1}f(0)\Big)=\big(s^{\alpha}+\tfrac{\alpha}{1-\alpha}\big)\hat{g}(s) \\
&\Leftrightarrow\tfrac{-\alpha B(\alpha)}{(1-\alpha)^2}\hat{f}(s)=\big(s^{\alpha}+\tfrac{\alpha}{1-\alpha}\big)\hat{g}(s)+\tfrac{B(\alpha)}{1-\alpha}s^{\alpha-1}f(0) \\
&\Leftrightarrow\hat{f}(s)=\tfrac{\alpha-1}{B(\alpha)}\hat{g}(s)-\tfrac{(1-\alpha)^2}{\alpha B(\alpha)}s^{\alpha}\hat{g}(s)+\tfrac{\alpha-1}{\alpha}s^{\alpha-1}f(0) \\
&\Leftrightarrow f(t)=\tfrac{\alpha-1}{B(\alpha)}g(t)-\tfrac{(1-\alpha)^2}{\alpha B(\alpha)}\Big(\prescript{RL}{}D^{\alpha}_{0+}g(t)+\prescript{RL}{}I^{1-\alpha}_{0+}g(0)\delta(t)\Big)+\tfrac{\alpha-1}{\alpha}\cdot\tfrac{t^{-\alpha}}{\Gamma(1-\alpha)}f(0),
\end{align*}
where $\prescript{RL}{}D$ is the Riemann--Liouville fractional derivative. So the unique Laplace-transformable solution to the ODE (\ref{ODE4}) is
\begin{equation*}
f(t)=\tfrac{\alpha-1}{B(\alpha)}g(t)-\tfrac{(1-\alpha)^2}{\alpha B(\alpha)}\prescript{RL}{}D^{\alpha}_{0+}g(t)-\Big(\tfrac{(1-\alpha)^2}{\alpha B(\alpha)}\prescript{RL}{}I^{1-\alpha}_{0+}g(0)\Big)\delta(t)+\tfrac{\alpha-1}{\alpha\Gamma(1-\alpha)}t^{-\alpha}f(0).
\end{equation*}

More generally, let us consider the following fractional ODE, inhomogeneous with arbitrary constant coefficients, identical to (\ref{ODE2}) except with ABC derivatives instead of ABR:
\begin{equation}
\label{ODE5}
\prescript{ABC}{}D^{\alpha}_{0+}f(t)-Af(t)=g(t)
\end{equation}
where $f$ and $g$ are Laplace-transformable functions, $\alpha\in(0,1)$, and $A$ is a constant. Note that this is the AB equivalent of a class of fractional ODEs which has been much studied in the classical Caputo case; see for example \cite{garrappa}. Write $k\coloneqq\tfrac{1-\alpha}{B(\alpha)}A$ again, so that equation (\ref{ODE5}) becomes $$\prescript{ABC}{}D^{\alpha}_{0+}f(t)-\tfrac{B(\alpha)}{1-\alpha}kf(t)=g(t).$$ Then take Laplace transforms of this, to get $$\frac{B(\alpha)}{1-\alpha}\Bigg(\frac{s^{\alpha}\hat{f}(s)-s^{\alpha-1}f(0)-k\big[s^{\alpha}+\tfrac{\alpha}{1-\alpha}\big]\hat{f}(s)}{s^{\alpha}+\tfrac{\alpha}{1-\alpha}}\Bigg)=\hat{g}(s)$$ and therefore $$\hat{f}(s)\Big((1-k)s^{\alpha}-\tfrac{k\alpha}{1-\alpha}\Big)-s^{\alpha-1}f(0)=\tfrac{1-\alpha}{B(\alpha)}\big(s^{\alpha}+\tfrac{\alpha}{1-\alpha}\big)\hat{g}(s).$$
Thus the Laplace-transformed solution $\hat{f}(s)$ is given by
\begin{align*}
\hat{f}(s)&=\Big((1-k)s^{\alpha}-\tfrac{k\alpha}{1-\alpha}\Big)^{-1}\bigg[\tfrac{1-\alpha}{B(\alpha)}\Big(s^{\alpha}+\tfrac{\alpha}{1-\alpha}\Big)\hat{g}(s)+s^{\alpha-1}f(0)\bigg] \\
&=\Big(1-\tfrac{k\alpha}{(1-k)(1-\alpha)}s^{-\alpha}\Big)^{-1}\bigg[\tfrac{1-\alpha}{(1-k)B(\alpha)}\Big(1+\tfrac{\alpha}{1-\alpha}s^{-\alpha}\Big)\hat{g}(s)+\tfrac{1}{(1-k)s}f(0)\bigg] \\
&=\Big(1+\tfrac{k\alpha}{(1-k)(1-\alpha)}s^{-\alpha}+\tfrac{k^2\alpha^2}{(1-k)^2(1-\alpha)^2}s^{-2\alpha}+\dots\Big)\bigg[\tfrac{1-\alpha}{(1-k)B(\alpha)}\Big(1+\tfrac{\alpha}{1-\alpha}s^{-\alpha}\Big)\hat{g}(s)+\tfrac{1}{(1-k)s}f(0)\bigg].
\end{align*}
Now we already know from the previous section that the inverse Laplace transform of the $\hat{g}(s)$ part of the RHS is given by the (equivalent) expressions (\ref{ODE2-soln1}) and (\ref{ODE2-soln2}), except with $h$ replaced by $g$ in each case. And the $f(0)$ part comes to $$\tfrac{1}{1-k}\sum_{n=0}^{\infty}\Big(\tfrac{k\alpha}{(1-k)(1-\alpha)}\Big)^ns^{-n\alpha-1}f(0),$$ so its inverse Laplace transform is
\begin{equation*}
\tfrac{1}{1-k}\sum_{n=0}^{\infty}\Big(\tfrac{k\alpha}{(1-k)(1-\alpha)}\Big)^n\frac{t^{n\alpha}}{\Gamma(n\alpha+1)}f(0)=\tfrac{1}{1-k}E_{\alpha}\Big(\tfrac{k\alpha}{(1-k)(1-\alpha)}t^{\alpha}\Big)f(0).
\end{equation*}
Thus, using equation (\ref{ODE2-soln2}) for the $\hat{g}(s)$ part, we find that the unique Laplace-transformable solution to the ODE (\ref{ODE2}) is given by
\begin{align}
\nonumber f(t)&=-\frac{1}{A}g(t)+\frac{1}{A(1-k)}\frac{\mathrm{d}}{\mathrm{d}t}\int_0^tE_{\alpha}\Big(\tfrac{k\alpha}{(1-k)(1-\alpha)}(t-x)^{\alpha}\Big)g(x)\,\mathrm{d}x+\frac{1}{1-k}E_{\alpha}\Big(\tfrac{k\alpha}{(1-k)(1-\alpha)}t^{\alpha}\Big)f(0) \\
\label{ODE5-soln}
&=\frac{k}{A(1-k)}g(t)+\frac{1}{A(1-k)}\int_0^t\frac{\mathrm{d}}{\mathrm{d}t}E_{\alpha}\Big(\tfrac{k\alpha}{(1-k)(1-\alpha)}(t-x)^{\alpha}\Big)g(x)\,\mathrm{d}x+\frac{1}{1-k}E_{\alpha}\Big(\tfrac{k\alpha}{(1-k)(1-\alpha)}t^{\alpha}\Big)f(0).
\end{align}

By applying the above argument multiple times, we can solve any linear sequential fractional ODE of the form
\begin{equation}
\label{ODE6}
\Big(\prescript{ABC}{}D^{\alpha}_{0+}-A\Big)\Big(\prescript{ABC}{}D^{\beta}_{0+}-B\Big)\dots\Big(\prescript{ABC}{}D^{\gamma}_{0+}-C\Big)f(t)=g(t)
\end{equation}
where $g$ is a Laplace-transformable function and $\alpha,\beta,\dots,\gamma\in(0,1)$ and $A,B,\dots,C$ are constants. In each case, we can construct a nested integral formula for the unique Laplace-transformable solution $f$ to the ODE (\ref{ODE6}), which depends linearly on the initial condition $f(0)$.

\begin{example}
As an example application of this method, let us consider the following sequential fractional ODE, analogous to (\ref{ABR-example}) but with ABC derivatives instead of ABR:
\begin{equation}
\label{ABC-example}
\prescript{ABC}{}D^{1/2}_{0+}\circ\prescript{ABC}{}D^{1/2}_{0+}f(t)=f(t)+g(t),
\end{equation}
where we take the normalisation function $B(\alpha)$ to be identically $1$. As in Example \ref{ODE-example1}, we can split this ODE into a coupled pair as follows:
\begin{align}
\label{ABC-example1}
\prescript{ABC}{}D^{1/2}_{0+}j(t)-j(t)&=g(t); \\
\label{ABC-example2}
\prescript{ABC}{}D^{1/2}_{0+}f(t)+f(t)&=j(t).
\end{align}
Note that both (\ref{ABC-example1}) and (\ref{ABC-example2}) are ODEs in the form of (\ref{ODE5}). In the first case, we have $\alpha=1/2$, $A=1$, and therefore $k=1/2$, so the formula (\ref{ODE5-soln}) becomes \[j(t)=g(t)+2\int_0^t\frac{\mathrm{d}}{\mathrm{d}t}E_{1/2}\big((t-x)^{1/2}\big)g(x)\,\mathrm{d}x+2E_{1/2}\big(t^{1/2}\big)j(0).\] In the second case, we have $\alpha=1/2$, $A=-1$, and therefore $k=-1/2$, so the formula (\ref{ODE5-soln}) becomes \[f(t)=\frac{1}{3}j(t)-\frac{2}{3}\int_0^t\frac{\mathrm{d}}{\mathrm{d}t}E_{1/2}\Big(-\tfrac{1}{3}(t-x)^{1/2}\Big)j(x)\,\mathrm{d}x+\frac{2}{3}E_{1/2}\Big(-\tfrac{1}{3}t^{1/2}\Big)f(0).\] So with initial conditions giving $f(0)=j(0)=0$, for example, the solution to the sequential ODE (\ref{ABR-example}) is:
\begin{multline*}
f(t)=\frac{1}{3}g(t)+\frac{2}{3}\int_0^t\frac{\mathrm{d}}{\mathrm{d}t}E_{1/2}\big((t-x)^{1/2}\big)g(x)\,\mathrm{d}x-\frac{2}{3}\int_0^t\frac{\mathrm{d}}{\mathrm{d}t}E_{1/2}\Big(-\tfrac{1}{3}(t-x)^{1/2}\Big)g(x)\,\mathrm{d}x \\ -\frac{2}{3}\int_0^t\frac{\mathrm{d}}{\mathrm{d}t}E_{1/2}\Big(-\tfrac{1}{3}(t-x)^{1/2}\Big)\Bigg[2\int_0^x\frac{\mathrm{d}}{\mathrm{d}x}E_{1/2}\big((x-y)^{1/2}\big)g(y)\,\mathrm{d}y\Bigg]\,\mathrm{d}x.
\end{multline*}
Thus we see that the only difference between the results for linear ODEs in the ABR model and in the ABC model is in how the initial conditions manifest themselves in the solutions.
\end{example}

Note that various examples of ODEs of the form (\ref{ODE2}) and (\ref{ODE5}) have already found applications in the real world, for example to electrical circuits of type RC, LC, and RL \cite{gomez-aguilar2}. Our analysis goes beyond these to cover general ODEs of the form (\ref{ODE3}) and (\ref{ODE6}), and also some nonlinear ODEs as we are about to see.

\subsection{Nonlinear ODEs by Laplace methods}
Similar methods to those utilised above can also be used to solve certain special classes of nonlinear ordinary differential equations. For example, consider the following ODE, a fractional version of one of the classes of ODE considered in \cite{adomian}:
\begin{equation}
\label{ODE-nonlinear}
\prescript{ABC}{}D^{\alpha}_{0+}f(t)-Af\ast f(t)=g(t),
\end{equation}
where $f$ and $g$ are Laplace-transformable functions, $\alpha\in(0,1)$, $A$ is a constant, and $\ast$ denotes the convolution operation. Taking Laplace transforms of this equation, we get: \[\frac{B(\alpha)}{1-\alpha}\cdot\frac{s^{\alpha-1}}{s^{\alpha}+\tfrac{\alpha}{1-\alpha}}\Big(s\hat{f}(s)-f(0)\Big)-A\big(\hat{f}(s)\big)^2=\hat{g}(s),\] which can be rearranged to \[A\big(\hat{f}(s)\big)^2-\frac{B(\alpha)}{1-\alpha}\cdot\frac{s^{\alpha}}{s^{\alpha}+\tfrac{\alpha}{1-\alpha}}\hat{f}(s)+\Big(\hat{g}(s)+\frac{B(\alpha)}{1-\alpha}\cdot\frac{s^{\alpha-1}}{s^{\alpha}+\tfrac{\alpha}{1-\alpha}}f(0)\Big)=0.\] But this is simply a quadratic equation in $\hat{f}(s)$, so we can solve it to find:
\begin{equation}
\label{ODE-nonlinear-soln}
\hat{f}(s)=\frac{\frac{B(\alpha)}{1-\alpha}\cdot\frac{s^{\alpha}}{s^{\alpha}+\tfrac{\alpha}{1-\alpha}}\pm\sqrt{\bigg(\frac{B(\alpha)}{1-\alpha}\cdot\frac{s^{\alpha}}{s^{\alpha}+\tfrac{\alpha}{1-\alpha}}\bigg)^2-4A\Big(\hat{g}(s)+\frac{B(\alpha)}{1-\alpha}\cdot\frac{s^{\alpha-1}}{s^{\alpha}+\tfrac{\alpha}{1-\alpha}}f(0)\Big)}}{2A}.
\end{equation}
Note that the right hand side of (\ref{ODE-nonlinear-soln}) depends only on $g(t)$ and $f(0)$, so we can take inverse Laplace transforms to get an explicit formula for $f(t)$.

Much the same arguments can of course be applied to the ODE (\ref{ODE-nonlinear}) with the ABC derivative replaced by an ABR one, and the final result would look similar to (\ref{ODE-nonlinear-soln}) except with different dependence on the initial conditions.

As we can see, the final results are less succinct and elegant than those obtained above for linear ODEs, but this is natural since nonlinear ODEs almost always present extra difficulties as compared to the simpler linear case.

\subsection{Nonlinear ODEs using the series formula}
There are also some classes of nonlinear ODEs for which the series formula of Theorem \ref{ABRL-series} enables us to obtain a quick solution. For example, consider the following nonlinear ODE, a case of the Riccati equation, which in the classical Caputo context was considered in \cite{elsaid}:
\begin{equation}
\label{Riccati}
\prescript{ABC}{}D_{0+}^{\alpha}f(t)=P+Q\big(f(t)\big)^2,\quad f(0)=f_0
\end{equation}
We consider the possibility of a convergent series solution of the form
\begin{equation}
\label{Riccati-ansatz}
f(t)=\sum_{k=0}^{\infty}a_kt^{k\alpha},
\end{equation}
whose ABC derivative can be written as follows, using the result of Theorem \ref{ABC-series}.
\begin{align*}
\prescript{ABC}{}D_{0+}^{\alpha}f(t)
&=\frac{B(\alpha)}{1-\alpha}\sum_{n=0}^{\infty}\Big(\frac{-\alpha}{1-\alpha}\Big)^n\prescript{RL}{}I^{n\alpha+1}_{0+}\left(\sum_{k=1}^{\infty}a_kk\alpha t^{k\alpha-1}\right) \\
&=\frac{B(\alpha)}{1-\alpha}\sum_{n=0}^{\infty}\sum_{k=1}^{\infty}\Big(\frac{-\alpha}{1-\alpha}\Big)^n\frac{a_k\Gamma(k\alpha+1)}{\Gamma((k+n)\alpha+1)}t^{(k+n)\alpha} \\
&=\sum_{m=1}^{\infty}\Bigg[\frac{B(\alpha)}{1-\alpha}\sum_{k=1}^{m}a_k\big(\tfrac{-\alpha}{1-\alpha}\big)^{m-k}\frac{\Gamma(k\alpha+1)}{\Gamma(m\alpha+1)}\Bigg]t^{m\alpha}.
\end{align*}
Meanwhile, the right hand side of (\ref{Riccati}), with the ansatz (\ref{Riccati-ansatz}), is simply \[P+Q\big(f(t)\big)^2=P+Q\sum_{k=0}^{\infty}a_kt^{k\alpha}\sum_{n=0}^{\infty}a_nt^{n\alpha}
=\sum_{m=0}^{\infty}\Bigg[P\delta_{m0}+Q\sum_{k=0}^ma_ka_{m-k}\Bigg]t^{m\alpha}.\] Thus the nonlinear Riccati equation (\ref{Riccati}) has a solution of the form (\ref{Riccati-ansatz}) with the coefficients $a_k$ defined to satisfy
\begin{align*}
0&=P+Qa_0^2; \\
\tfrac{B(\alpha)}{1-\alpha}\sum_{k=1}^{m}a_k\big(\tfrac{-\alpha}{1-\alpha}\big)^{m-k}\frac{\Gamma(k\alpha+1)}{\Gamma(m\alpha+1)}&=Q\sum_{k=0}^ma_ka_{m-k}\quad\text{ for all $m>0$.}
\end{align*}
This identity can be used to find all the coefficients, by solving it for each value of $m$ in turn. For $m=0$, we have $0=P+Qa_0^2$, 
so $a_0=\pm\sqrt{-P/Q}$. For $m>0$, we have \[a_m=\frac{\frac{B(\alpha)}{1-\alpha}\sum_{k=1}^{m-1}a_k\left(\frac{-\alpha}{1-\alpha}\right)^{m-k}\frac{\Gamma(k\alpha+1)}{\Gamma(m\alpha+1)}-Q\sum_{k=1}^{m-1}a_ka_{m-k}}{2Qa_0-\frac{B(\alpha)}{1-\alpha}}.\] Thus, the general solution of the form (\ref{Riccati-ansatz}) to the Riccati equation (\ref{Riccati}) is
\begin{equation*}
f(t)=\sqrt{\frac{-P}{Q}}+\sum_{m=1}^{\infty}\frac{\frac{B(\alpha)}{1-\alpha}\sum_{k=1}^{m-1}a_k\left(\frac{-\alpha}{1-\alpha}\right)^{m-k}\frac{\Gamma(k\alpha+1)}{\Gamma(m\alpha+1)}-Q\sum_{k=1}^{m-1}a_ka_{m-k}}{2Qa_0-\frac{B(\alpha)}{1-\alpha}}t^{m\alpha}.
\end{equation*}

This is an extension to the AB model of the results of \cite{elsaid} for series solutions of the Riccati equation. Note that this simple solution was only possible because we were able to use the new series formula (\ref{ABC-series-formula}) for AB derivatives. The fractional derivative on the left hand side of the ODE became a double series to match the double series on the right hand side, and we could solve the resulting identity very directly.

\section{The semigroup property}
\label{sec-semigroup}
The semigroup property for AB fractional differintegrals is \textit{not} satisfied in general. For example, taking $B(\alpha)=1$ we get $$\prescript{AB}{}I^{2/3}_{0+}(t)=\Big(\tfrac{1}{3}+\tfrac{2}{3}\prescript{RL}{}I^{2/3}_{0+}\Big)t=\tfrac{1}{3}t+\tfrac{2}{3\Gamma(8/3)}t^{5/3}$$ and yet
\begin{align*}
\prescript{AB}{}I^{1/3}_{0+}\Big(\prescript{AB}{}I^{1/3}_{0+}(t)\Big)&=\prescript{AB}{}I^{1/3}_{0+}\Big(\tfrac{2}{3}t+\tfrac{1}{3\Gamma(7/3)}t^{4/3}\Big) \\
&=\tfrac{2}{3}\Big(\tfrac{2}{3}t+\tfrac{1}{3\Gamma(7/3)}t^{4/3}\Big)+\tfrac{1}{3\Gamma(7/3)}\Big(\tfrac{2}{3}t^{4/3}+\tfrac{\Gamma(7/3)}{\Gamma(8/3)}t^{5/3}\Big) \\
&=\tfrac{4}{9}t+\tfrac{4}{9\Gamma(7/3)}t^{4/3}+\tfrac{1}{3\Gamma(8/3)}t^{5/3}
\end{align*}
-- two entirely different expressions.

Can we find conditions for when the semigroup property \textit{does} hold?

Firstly, note that it will be sufficient to consider fractional \textit{integrals} only. Any function which satisfies the semigroup property for ABR fractional derivatives generates one which satisfies it for AB fractional integrals, and vice versa. This is because $$\prescript{ABR}{}I^{\alpha}_{0+}\Big(\prescript{ABR}{}I^{\beta}_{0+}f(t)\Big)=g(t)=\prescript{ABR}{}I^{\alpha+\beta}_{0+}f(t)$$ is exactly equivalent to $$\prescript{ABR}{}D^{\beta}_{0+}\Big(\prescript{ABR}{}D^{\alpha}_{0+}g(t)\Big)=f(t)=\prescript{ABR}{}D^{\alpha+\beta}_{0+}g(t).$$ This is good to know, because the definition of AB fractional integrals is much simpler and easier to work with than that of ABR fractional derivatives.

The semigroup property for AB fractional integrals is equivalent to the following conditions (where we use $\prescript{AB}{}I$ to denote AB fractional integrals and just $I$ to denote Riemann--Liouville fractional integrals):
\begin{align}
\nonumber &\,\prescript{AB}{}I^{\alpha}_{0+}\Big(\prescript{AB}{}I^{\beta}_{0+}f(t)\Big)=\prescript{AB}{}I^{\alpha+\beta}_{0+}f(t) \\
\nonumber \Leftrightarrow&\,\bigg(\frac{1-\alpha}{B(\alpha)}+\frac{\alpha}{B(\alpha)}I^{\alpha}\bigg)\bigg(\frac{1-\beta}{B(\beta)}+\frac{\beta}{B(\beta)}I^{\beta}\bigg)f(t)=\bigg(\frac{1-\alpha-\beta}{B(\alpha+\beta)}+\frac{\alpha+\beta}{B(\alpha+\beta)}I^{\alpha+\beta}\bigg)f(t) \\
\nonumber
\begin{split}
\Leftrightarrow&\,\frac{(1-\alpha)(1-\beta)}{B(\alpha)B(\beta)}f+\frac{\alpha(1-\beta)}{B(\alpha)B(\beta)}I^{\alpha}f+\frac{\beta(1-\alpha)}{B(\alpha)B(\beta)}I^{\beta}f+\frac{\alpha\beta}{B(\alpha)B(\beta)}I^{\alpha+\beta}f \\
&\hspace{8cm}=\frac{1-\alpha-\beta}{B(\alpha+\beta)}f+\frac{\alpha+\beta}{B(\alpha+\beta)}I^{\alpha+\beta}f
\end{split} \\
\label{semigroup-FIE}
\begin{split}
\Leftrightarrow&\,\bigg[\frac{\alpha\beta}{B(\alpha)B(\beta)}-\frac{\alpha+\beta}{B(\alpha+\beta)}\bigg]I^{\alpha+\beta}f+\frac{\alpha(1-\beta)}{B(\alpha)B(\beta)}I^{\alpha}f+\frac{\beta(1-\alpha)}{B(\alpha)B(\beta)}I^{\beta}f \\
&\hspace{7cm}+\bigg[\frac{(1-\alpha)(1-\beta)}{B(\alpha)B(\beta)}-\frac{1-\alpha-\beta}{B(\alpha+\beta)}\bigg]f=0.
\end{split}
\end{align}
Thus we have a Riemann--Liouville fractional integral equation in $f$ which must be satisfied in order for the AB fractional integrals of $f$ to have the semigroup property. If we assume for simplicity that the normalisation function $B$ satisfies its own semigroup property $B(\alpha)B(\beta)=B(\alpha+\beta)$, then we can simplify the condition (\ref{semigroup-FIE}) as follows:
\begin{align*}
(\ref{semigroup-FIE})&\Leftrightarrow\big(\alpha\beta-\alpha-\beta\big)I^{\alpha+\beta}f +\big(\alpha-\alpha\beta\big)I^{\alpha}f+\big(\beta-\beta\alpha\big)I^{\beta}f +\alpha\beta f=0 \\
&\Leftrightarrow\alpha\beta\Big(I^{\alpha+\beta}f-I^{\alpha}f-I^{\beta}f+f\Big)+\alpha\big(I^{\alpha}f-I^{\alpha+\beta}f\Big)+\beta\big(I^{\beta}f-I^{\alpha+\beta}f\big)=0 \\
&\Leftrightarrow\alpha\beta\big(I^{\alpha}-1\big)\big(I^{\beta}-1\big)f-\alpha I^{\alpha}\big(I^{\beta}-1\big)f-\beta I^{\beta}\big(I^{\alpha}-1\big)f=0 \\
&\Leftrightarrow\Big[\alpha\big(I^{\beta}-1\big)-I^{\beta}\Big]\Big[\beta\big(I^{\alpha}-1\big)-I^{\alpha}\Big]f-I^{\alpha+\beta}f=0 \\
&\Leftrightarrow\Big[(\alpha-1)I^{\beta}-\alpha\Big]\Big[(\beta-1)I^{\alpha}-\beta\Big]f =I^{\alpha+\beta}f.
\end{align*}

Using the composition properties of Riemann--Liouville fractional differintegrals, we can derive a necessary condition for the semigroup property in the form of a Riemann--Liouville fractional \textit{differential} equation by applying $D^{\alpha+\beta}=\prescript{RL}{}D^{\alpha+\beta}_{0+}$ to (\ref{semigroup-FIE}):
\begin{multline}
\label{semigroup-FDE}
(\ref{semigroup-FIE})\Rightarrow\bigg[\frac{(1-\alpha)(1-\beta)}{B(\alpha)B(\beta)}-\frac{1-\alpha-\beta}{B(\alpha+\beta)}\bigg]D^{\alpha+\beta}f+\frac{\alpha(1-\beta)}{B(\alpha)B(\beta)}D^{\beta}f+\frac{\beta(1-\alpha)}{B(\alpha)B(\beta)}D^{\alpha}f \\ +\bigg[\frac{\alpha\beta}{B(\alpha)B(\beta)}-\frac{\alpha+\beta}{B(\alpha+\beta)}\bigg]f=0
\end{multline}
Assuming that $B(\alpha)B(\beta)=B(\alpha+\beta)$, we can again rewrite this condition in a more elegant form:
\begin{equation*}
(\ref{semigroup-FDE})\Leftrightarrow\alpha\beta D^{\alpha+\beta}f+\alpha(1-\beta)D^{\beta}f+\beta(1-\alpha)D^{\alpha}f+ (\alpha\beta-\alpha-\beta)f=0.
\end{equation*}
Using the methods described in \cite{miller}, we can solve this fractional ODE for rational $\alpha,\beta$ by finding the roots of the indicial polynomial
\begin{equation*}
P(x)=\alpha\beta x^{\alpha+\beta}+\alpha(1-\beta)x^{\beta}+\beta(1-\alpha)x^{\alpha}+ (\alpha\beta-\alpha-\beta)=(\beta x^{\alpha}-\beta+1)(\alpha x^{\beta}-\alpha+1)-1.
\end{equation*}
This equation is not going to have neat solutions in general, but in the case where $\alpha=\beta$ things become easier. In this case, our assumption on $B$ from earlier becomes $B(\alpha)^2=B(2\alpha)$ and the indicial polynomial is
\begin{align*}
P(x)&=\alpha^2x^{2\alpha}+2\alpha(1-\alpha)x^{\alpha}+ (\alpha^2-2\alpha)=\alpha^2\big(x^{2\alpha}+\tfrac{2-2\alpha}{\alpha}x^{\alpha}+ \tfrac{\alpha-2}{\alpha}\big) \\
&=\alpha^2\big(x-1\big)\big(x-\tfrac{\alpha-2}{\alpha}\big).
\end{align*}
Here the indicial polynomial is relatively easy to solve, and we can use its roots to construct a solution $f$ to the fractional ODE (\ref{semigroup-FDE}) in the form of a linear combination of incomplete gamma functions.

\begin{example}
For example, let us consider the simplest case, that in which $\alpha=\beta=\tfrac{1}{q}$ for some natural number $q>2$. Here we have $$P'(x)=2\alpha(\alpha x-\alpha+1), P'(1)=2\alpha,P'\big(\tfrac{\alpha-2}{\alpha}\big)=-2\alpha$$ and by \cite{miller} the solution to the fractional ODE (\ref{semigroup-FDE}) is
\begin{equation}
\label{semigroup-soln}
f(t)=\frac{1}{2\alpha}\sum_{k=0}^{q-1}E_t(-k\alpha,1)-\frac{1}{2\alpha}\sum_{k=0}^{q-1}\big(\tfrac{\alpha-2}{\alpha}\big)^{q-k-1}E_t\big(-k\alpha,\big(\tfrac{\alpha-2}{\alpha}\big)^q\big)
\end{equation}
where $E$ in this case is the relative of the Mittag-Leffler function defined by $E_t(\nu,a)=z^{\nu}\sum_{n=0}^{\infty}\tfrac{(at)^n}{\Gamma(\nu+n+1)}$.

So the function $f$ defined by (\ref{semigroup-soln}) is the only function satisfying the semigroup property $$\prescript{ABR}{}I^{\alpha}_{0+}\Big(\prescript{ABR}{}I^{\alpha}_{0+}f(t)\Big)=\prescript{ABR}{}I^{2\alpha}_{0+}f(t)$$ where $\alpha=\tfrac{1}{q}$, $q>2$ is a natural number, and $B$ satisfies $B(\alpha)^2=B(2\alpha)$.
\end{example}

\section{The product rule}
\label{sec-product}
Among other results \cite{osler3,osler4,osler5} on RL differintegrals, Osler proved \cite{osler1} that the product rule can be generalised to the following identity for Riemann--Liouville fractional differintegrals:
\begin{equation}
\label{Osler-Leibniz}
\prescript{RL}{}D^{\alpha}_{a+}\big(u(t)v(t)\big)=\sum_{n=0}^{\infty}\binom{\alpha}{n}\prescript{RL}{}D^{\alpha-n}_{a+}u(t)\prescript{RL}{}D^{n}_{a+}v(t)\hspace{1cm}\forall\alpha\in\mathbb{C},t\in R\backslash\{a\}
\end{equation}
where $u(t)$, $v(t)$, and $u(t)v(t)$ are all functions in the form $t^{\lambda}\eta(t)$ with $\mathrm{Re}(\lambda)>-1$ and $\eta$ analytic on a domain $R\subset\mathbb{C}$ containing $a$. Note that $\mathrm{Re}(\alpha)$ can be either positive or negative, so this identity covers fractional integrals as well as derivatives. Thus for $\alpha>0$ we have
\begin{equation}
\prescript{RL}{}I^{\alpha}_{a+}\big(u(t)v(t)\big)=\sum_{n=0}^{\infty}\binom{-\alpha}{n}\prescript{RL}{}I^{\alpha+n}_{a+}u(t)\frac{\mathrm{d}^nv}{\mathrm{d}t^n},
\end{equation}
which we can use to prove a corresponding identity for ABR fractional derivatives. For any $\alpha\in(0,1)$ and $u,v$ as above,
\begin{align}
\nonumber \prescript{ABR}{}D^{\alpha}_{a+}\big(u(t)v(t)\big)&=\frac{B(\alpha)}{1-\alpha}\sum_{n=0}^{\infty}\Big(\frac{-\alpha}{1-\alpha}\Big)^n\prescript{RL}{}I^{\alpha n}_{a+}\big(u(t)v(t)\big) \\
\nonumber &=\frac{B(\alpha)}{1-\alpha}\sum_{n=0}^{\infty}\sum_{m=0}^{\infty}\Big(\frac{-\alpha}{1-\alpha}\Big)^n\binom{-n\alpha}{m}\prescript{RL}{}I^{\alpha n+m}_{a+}u(t)\frac{\mathrm{d}^mv}{\mathrm{d}t^m} \\
\label{AB-Leibniz}
&=\sum_{m=0}^{\infty}\frac{\mathrm{d}^mv}{\mathrm{d}t^m}\Bigg[\frac{B(\alpha)}{1-\alpha}\sum_{n=0}^{\infty}\Big(\frac{-\alpha}{1-\alpha}\Big)^n\binom{-n\alpha}{m}\prescript{RL}{}I^{\alpha n+m}_{a+}u(t)\Bigg].
\end{align}
Note that when $m=0$, the term in square brackets is exactly $\prescript{ABR}{}D^{\alpha}_{a+}u(t)$.

In order to prove rigorously that the double series in (\ref{AB-Leibniz}) converges, we shall express it as the sum of a finite series and a remainder term, and then prove that the remainder term tends to zero. Specifically, we start from the following formula, equation (2.199) in \cite{podlubny}:
\begin{equation}
\label{Leibniz-remainder}
\prescript{RL}{}D^{\alpha}_{a+}\big(u(t)v(t)\big)=\sum_{n=0}^{N}\binom{\alpha}{n}\prescript{RL}{}D^{\alpha-n}_{a+}u(t)\prescript{RL}{}D^{n}_{a+}v(t)-R_N^{\alpha}(t),
\end{equation}
valid for $\alpha\in\mathbb{R},n\geq\alpha+1,u\in C[a,t],v\in C^{N+1}[a,t]$, where the remainder term $R_N^{\alpha}(t)$ is defined by $$R_N^{\alpha}(t)=\frac{1}{N!\Gamma(-\alpha)}\int_a^t(t-\tau)^{-\alpha-1}u(\tau)\bigg[\int_{\tau}^tv^{(N+1)}(\xi)(\tau-\xi)^N\,\mathrm{d}\xi\bigg]\mathrm{d}\tau.$$

Applying (\ref{Leibniz-remainder}) with $\alpha$ replaced by $-n\alpha$ for $n\in\mathbb{N}$, we find
\begin{align}
\nonumber \prescript{ABR}{}D^{\alpha}_{a+}\big(u(t)v(t)\big)&=\frac{B(\alpha)}{1-\alpha}\sum_{n=0}^{\infty}\Big(\frac{-\alpha}{1-\alpha}\Big)^n\prescript{RL}{}I^{\alpha n}_{a+}\big(u(t)v(t)\big) \\
\nonumber &=\frac{B(\alpha)}{1-\alpha}\sum_{n=0}^{\infty}\Bigg[\sum_{m=0}^{N}\Big(\frac{-\alpha}{1-\alpha}\Big)^n\binom{-n\alpha}{m}\prescript{RL}{}D^{-n\alpha-m}_{a+}u(t)\prescript{RL}{}D^{m}_{a+}v(t)-R_N^{-n\alpha}(t)\Bigg] \\
\label{AB-Leibniz-remainder}
&=\sum_{m=0}^{N}\frac{\mathrm{d}^mv}{\mathrm{d}t^m}\Bigg[\frac{B(\alpha)}{1-\alpha}\sum_{n=0}^{\infty}\Big(\frac{-\alpha}{1-\alpha}\Big)^n\binom{-n\alpha}{m}\prescript{RL}{}I^{\alpha n+m}_{a+}u(t)\Bigg]-\frac{B(\alpha)}{1-\alpha}\sum_{n=0}^{\infty}R_N^{-n\alpha}(t),
\end{align}
where the interchange of summations is valid because by Theorem \ref{ABRL-series} we know that the sum over $n$ converges locally uniformly whenever the relevant ABR derivative is well-defined. So in order to establish the convergence of the outer series in (\ref{AB-Leibniz}), it will suffice to prove that $$\lim_{N\rightarrow\infty}\sum_{n=0}^{\infty}R_N^{-n\alpha}(t)=0$$ for $\phi\in C^{\infty}[a,t]$. And this can be proven by almost exactly the same argument as in \cite{podlubny}. Equation (2.201) from there gives $$R_N^{-n\alpha}(t)=\frac{(-1)^N(t-a)^{N+n\alpha+1}}{N!\Gamma(n\alpha)}\int_0^1\int_0^1u\big(a+\eta(t-a)\big)v^{(N+1)}\big(a+(t-a)(\zeta+\eta-\zeta\eta)\big)\,\mathrm{d}\eta\,\mathrm{d}\zeta,$$ so
\begin{multline*}
\sum_{n=0}^{\infty}R_N^{-n\alpha}(t)=\frac{(-1)^{N}(t-a)^{N+2}}{N!}\frac{\mathrm{d}}{\mathrm{d}t}\Big(E_{\alpha}\big((t-a)^{\alpha}\big)\Big)\times \\ \int_0^1\int_0^1u\big(a+\eta(t-a)\big)v^{(N+1)}\big(a+(t-a)(\zeta+\eta-\zeta\eta)\big)\,\mathrm{d}\eta\,\mathrm{d}\zeta,
\end{multline*}
which converges to $0$ as $N\rightarrow\infty$, as required. Note that in order to derive the final expression, we used the fact that $$\sum_{n=0}^{\infty}\frac{t^{-n\alpha}}{\Gamma(n\alpha)}=-\sum_{n=0}^{\infty}\frac{-n\alpha t^{-n\alpha}}{\Gamma(n\alpha+1)}=-t\frac{\mathrm{d}}{\mathrm{d}t}\Big(\sum_{n=0}^{\infty}\frac{t^{-n\alpha}}{\Gamma(n\alpha+1)}\Big)=-t\frac{\mathrm{d}}{\mathrm{d}t}E_{\alpha}\big(t^{-\alpha}\big).$$ Thus (\ref{AB-Leibniz}) is valid as a generalisation of the Leibniz rule to ABR fractional derivatives.

\begin{example}
As an example to verify this new result, let us take $u(t)=t^2$ and $v(t)=t$ with $a=0$. Then the right-hand side of (\ref{AB-Leibniz}) becomes
\begin{align*}
&\sum_{m=0}^{1}\frac{\mathrm{d}^m}{\mathrm{d}t^m}(t)\Bigg[\frac{B(\alpha)}{1-\alpha}\sum_{n=0}^{\infty}\Big(\frac{-\alpha}{1-\alpha}\Big)^n\binom{-n\alpha}{m}\prescript{RL}{}I^{\alpha n+m}_{0+}(t^2)\Bigg] \\
=\;&t\Bigg[\frac{B(\alpha)}{1-\alpha}\sum_{n=0}^{\infty}\Big(\frac{-\alpha}{1-\alpha}\Big)^n\frac{2}{\Gamma(3+\alpha n)}t^{2+\alpha n}\Bigg]+\Bigg[\frac{B(\alpha)}{1-\alpha}\sum_{n=0}^{\infty}\Big(\frac{-\alpha}{1-\alpha}\Big)^n(-n\alpha)\frac{2}{\Gamma(4+\alpha n)}t^{3+\alpha n}\Bigg] \\
=\;&\frac{B(\alpha)}{1-\alpha}\sum_{n=0}^{\infty}\Big(\frac{-\alpha}{1-\alpha}\Big)^n\bigg[\frac{2}{\Gamma(3+\alpha n)}-\frac{2n\alpha}{\Gamma(4+\alpha n)}\bigg]t^{3+\alpha n} \\
=\;&\frac{B(\alpha)}{1-\alpha}\sum_{n=0}^{\infty}\Big(\frac{-\alpha}{1-\alpha}\Big)^n\frac{6}{\Gamma(4+\alpha n)}t^{3+\alpha n}=\frac{B(\alpha)}{1-\alpha}\sum_{n=0}^{\infty}\Big(\frac{-\alpha}{1-\alpha}\Big)^n\prescript{RL}{}I^{\alpha n}_{0+}(t^3)=\prescript{ABR}{}D^{\alpha}_{0+}(t^3),
\end{align*}
exactly as expected. \qed
\end{example}

The product rule is an extremely important result to examine in any new model of fractional calculus. As was pointed out by Tarasov \cite{tarasov}, the Leibniz rule plays a crucial role in fractional calculus and its applications, to the extent that it can be used as a test for the validity of a given model.

Furthermore, having an analogue of the product rule enables us to greatly extend the number of functions whose fractional derivatives can be easily calculated. We now have a formula which can be used to compute AB derivatives of anything which can be expressed as a product of two or more functions whose AB derivatives are already known -- thus expanding the space of functions on which we can easily do calculations. The expression (\ref{AB-Leibniz}) is admittedly cumbersome, but this is a common problem with fractional generalisations of results from calculus (we see it also in the classical result (\ref{Osler-Leibniz})), and it is still computationally manageable.

\section{The chain rule}
\label{sec-chain}
\subsection{Deriving the chain rule}
Osler also generalised the chain rule to the following identity for fractional Riemann--Liouville differintegrals of composite functions \cite{osler2}:
\begin{equation}
\label{Osler-chain}
\prescript{RL}{}D^{\alpha}_{a+}f(g(t))=\frac{(t-a)^{-\alpha}}{\Gamma(1-\alpha)}f(g(t))+\sum_{n=1}^{\infty}\binom{\alpha}{n}\frac{(t-a)^{n-\alpha}}{\Gamma(n-\alpha+1)}\sum_{k=1}^n\frac{\mathrm{d}^kf(g(t))}{\mathrm{d}g(t)^k}\sum_{(P_1,\dots,P_n)}\Bigg[\prod_{i=1}^n\tfrac{i}{P_i!(i!)^{P_i}}\Big(\tfrac{\mathrm{d}^ig(t)}{\mathrm{d}t^i}\Big)^{P_i}\Bigg]
\end{equation}
for all $t,\alpha,a\in\mathbb{C}$, where $g$ is smooth and $f(g(t))$ is a function of the form $t^{\lambda}\eta(t)$ with $\mathrm{Re}(\lambda)>-1$ and $\eta$ analytic on a domain $R\subset\mathbb{C}$ containing $a$, and the final sum is over all $n$-tuples $(P_1,\dots,P_n)$ of non-negative integers such that $\sum_iP_i=k$ and $\sum_iiP_i=n$. Note that again this identity covers fractional integrals as well as derivatives, so we can use it to get a corresponding identity for ABR fractional derivatives. For $\alpha\in(0,1)$ and any $f,g$ as above such that $f(g(t))$ is an $L^1$ function,
\begin{align}
\nonumber \prescript{ABR}{}D^{\alpha}_{a+}f(g(t))&=\frac{B(\alpha)}{1-\alpha}\sum_{m=0}^{\infty}\Big(\frac{-\alpha}{1-\alpha}\Big)^m\prescript{RL}{}I^{\alpha m}_{a+}f(g(t)) \\
\begin{split}
\nonumber &=\frac{B(\alpha)}{1-\alpha}\sum_{m=0}^{\infty}\Big(\frac{-\alpha}{1-\alpha}\Big)^m\Bigg[\frac{(t-a)^{m\alpha}}{\Gamma(m\alpha+1)}f(g(t)) \\
\nonumber &\hspace{1cm}+\sum_{n=1}^{\infty}\binom{-m\alpha}{n}\frac{(t-a)^{n+m\alpha}}{\Gamma(n+m\alpha+1)}\sum_{k=1}^n\frac{\mathrm{d}^kf(g(t))}{\mathrm{d}g(t)^k}\sum_{(P_1,\dots,P_n)}\Bigg[\prod_{i=1}^n\tfrac{i}{P_i!(i!)^{P_i}}\Big(\tfrac{\mathrm{d}^ig(t)}{\mathrm{d}t^i}\Big)^{P_i}\Bigg]\Bigg]
\end{split} \\
\label{AB-chain}
&=\frac{B(\alpha)}{1-\alpha}\bigg[E_{\alpha}\Big(\tfrac{-\alpha}{1-\alpha}(t-a)^{\alpha}\Big)f(g(t))+\sum_{m=0}^{\infty}\sum_{n=1}^{\infty}\big(\tfrac{-\alpha}{1-\alpha}\big)^m\tbinom{-m\alpha}{n}\frac{(t-a)^{n+m\alpha}}{\Gamma(n+m\alpha+1)}\sum_{k=1}^n(\dots)\bigg],
\end{align}
where the series in (\ref{Osler-chain}) are convergent by the proof in \cite{osler2} and the outer series in (\ref{AB-chain}) is locally uniformly convergent by Theorem \ref{ABRL-series}, therefore the sums can be interchanged and the result is a well-defined convergent series.

\begin{example}
As an example to verify this new identity, let us take $a=0$ and $g(t)=e^t$ and $f(t)=t^2$ so that $f(g(t))=e^{2t}$. Then the $k$th derivative of $f(g(t))$ with respect to $g(t)$ is $2e^t$ if $k=1$, $2$ if $k=2$, and $0$ if $k>2$. For $k=1$, we must have $(P_1,\dots,P_n)=(0,\dots,0,1)$ and therefore $$\prod_{i=1}^n\tfrac{i}{P_i!(i!)^{P_i}}\Big(\tfrac{\mathrm{d}^ig(t)}{\mathrm{d}t^i}\Big)^{P_i}=\frac{\prod_{i=1}^ni}{1!(n!)^1}\big(e^t\big)^1=e^t.$$ For $k=2$, we must have either $P_j=P_{n-j}=1$ for some $j\neq\tfrac{n}{2}$ and all other $P_i=0$ or (if $n$ is even) $P_{n/2}=2$ and all other $P_i=0$. In the first case, $$\prod_{i=1}^n\tfrac{i}{P_i!(i!)^{P_i}}\Big(\tfrac{\mathrm{d}^ig(t)}{\mathrm{d}t^i}\Big)^{P_i}=\frac{\prod_{i=1}^ni}{(1!)^2j!(n-j)!}\big(e^t\big)^1\big(e^t\big)^1=\binom{n}{j}e^{2t}$$ while in the second case, $$\prod_{i=1}^n\tfrac{i}{P_i!(i!)^{P_i}}\Big(\tfrac{\mathrm{d}^ig(t)}{\mathrm{d}t^i}\Big)^{P_i}=\frac{\prod_{i=1}^ni}{2!(\tfrac{n}{2}!)^2}\big(e^t\big)^2=\frac{1}{2}\binom{n}{n/2}e^{2t}.$$ So the right-hand side of (\ref{AB-chain}) becomes
\begin{equation*}
\frac{B(\alpha)}{1-\alpha}\bigg[E_{\alpha}\Big(\tfrac{-\alpha}{1-\alpha}t^{\alpha}\Big)e^{2t}+\sum_{m=0}^{\infty}\sum_{n=1}^{\infty}\big(\tfrac{-\alpha}{1-\alpha}\big)^m\tbinom{-m\alpha}{n}\frac{t^{n+m\alpha}}{\Gamma(n+m\alpha+1)}\bigg((2e^{t})e^t+(2)\sum_{j=1}^{\lfloor n/2\rfloor}\binom{n}{j}e^{2t}+\binom{n}{n/2}e^{2t}\bigg)\bigg],
\end{equation*}
where the last term is present only if $n$ is even. This simplifies to
\begin{align*}
&=\frac{B(\alpha)}{1-\alpha}\bigg[E_{\alpha}\Big(\tfrac{-\alpha}{1-\alpha}t^{\alpha}\Big)e^{2t}+\sum_{m=0}^{\infty}\sum_{n=1}^{\infty}\big(\tfrac{-\alpha}{1-\alpha}\big)^m\tbinom{-m\alpha}{n}\frac{t^{n+m\alpha}}{\Gamma(n+m\alpha+1)}\sum_{j=0}^n\tbinom{n}{j}e^{2t}\bigg] \\
&=\frac{B(\alpha)}{1-\alpha}\bigg[\sum_{m=0}^{\infty}\big(\tfrac{-\alpha}{1-\alpha}\big)^m\frac{t^{\alpha m}}{\Gamma(\alpha m+1)}e^{2t}+\sum_{m=0}^{\infty}\sum_{n=1}^{\infty}\big(\tfrac{-\alpha}{1-\alpha}\big)^m\tbinom{-m\alpha}{n}\frac{t^{n+m\alpha}}{\Gamma(n+m\alpha+1)}2^ne^{2t}\bigg] \\
&=\frac{B(\alpha)}{1-\alpha}\sum_{m=0}^{\infty}\sum_{n=0}^{\infty}\big(\tfrac{-\alpha}{1-\alpha}\big)^m\tbinom{-m\alpha}{n}\frac{t^{n+m\alpha}}{\Gamma(n+m\alpha+1)}2^ne^{2t},
\end{align*}
which is exactly the formula for $\prescript{ABR}{}D^{\alpha}_{0+}(e^{2t})$, since the Riemann--Liouville integrals of the exponential function are given by $\prescript{RL}{}I^{m\alpha}_{0+}(e^{2t})=\sum_{n=0}^{\infty}\frac{2^nt^{m\alpha+n}}{\Gamma(m\alpha+n+1)}$. \qed
\end{example}

Once again, being able to use the chain rule enables a big extension to the class of functions with easily computable fractional derivatives. With the results of this and the previous section, we can now explicitly compute AB derivatives of anything which can be derived by multiplication and composition from two or more functions with already known AB derivatives. So once more we have expanded the space of functions on which it is easy to perform calculations with AB derivatives. The expression (\ref{AB-chain}) is even more cumbersome than (\ref{AB-Leibniz}), but as before it is still feasible to compute.

\subsection{An application to fractional mechanics}
In the following section we present a physical application of our results and series representation of AB derivatives. The idea is to show how the expression of the chain rule proved above plays a vital role in writing down the Euler-Lagrange equations of two Lagrangians which differ by a total time-like derivative. From the classical viewpoint these Lagrangians are equivalent, but when we fractionalise the classical Lagrangians this property is lost.

In \cite{baleanu2} the authors used the series expression of the chain rule for Riemann-Liouville fractional derivatives \cite{podlubny} in order to propose a Lagrangian and Hamiltonian representation as a 1+1 field theory. Their starting point was the following expression:
\begin{equation}
\label{Lagrangian}
L'_f=L_f\big(q^{\rho}(t),D_{a+}^{\alpha}q^{\rho}(t)\big)+D_{a+}^{\alpha}F\big(q^m(t)\big),\quad\rho=1,\dots,n
\end{equation}
In our case, the representation of the ABR and ABC derivatives as series, together with the form (\ref{AB-chain}) of the chain rule from the previous subsection, lead us to a generalized version of the results reported in \cite{baleanu2} which are valid now on a bigger domain. Our analogue of (\ref{Lagrangian}) is
\begin{equation}
\label{Lagrangian-AB}
\mathcal{L}'_f=\mathcal{L}_f\big(p^{\gamma}(t),\prescript{ABR}{}D_{a+}^{\alpha}p^{\gamma}(t)\big)+\prescript{ABR}{}D_{a+}^{\alpha}G\big(p^m(t)\big),\quad\gamma=1,\dots,n,
\end{equation}
and again we need the chain rule -- in this case, equation (\ref{AB-chain}) -- in order to evaluate the final term:
\begin{multline*}
\mathcal{L}'_f=\mathcal{L}_f\big(p^{\gamma}(t),\prescript{ABR}{}D_{a+}^{\alpha}p^{\gamma}(t)\big)+\frac{B(\alpha)}{1-\alpha}\Bigg[E_{\alpha}\Big(\tfrac{-\alpha}{1-\alpha}(t-a)^{\alpha}\Big)G(p^m(t))+ \\ \sum_{m=0}^{\infty}\sum_{n=1}^{\infty}\big(\tfrac{-\alpha}{1-\alpha}\big)^m\tbinom{-m\alpha}{n}\frac{(t-a)^{n+m\alpha}}{\Gamma(n+m\alpha+1)}\sum_{k=1}^n\frac{\mathrm{d}^kG(p^m(t))}{\mathrm{d}p^m(t)^k}\sum_{(P_1,\dots,P_n)}\Bigg[\prod_{i=1}^n\tfrac{i}{P_i!(i!)^{P_i}}\Big(\tfrac{\mathrm{d}^ig(t)}{\mathrm{d}t^i}\Big)^{P_i}\Bigg]\Bigg].
\end{multline*}
Note that this expression involves infinitely many derivatives of $p(t)$, giving a non-local theory which will give rise to interesting new Euler-Lagrange equations and Hamiltonian constructions. More precisely, we can say that the dynamical variable $p(t)$ is given by a $1+1$ dimensional field $Q(x,t)$ fulfilling the following chirality condition \cite{baleanu2, bering}: \[\frac{\mathrm{d}Q(x,t)}{\mathrm{d}t}=\partial_xQ(x,t).\]

Thus, the results reported in our paper can lead to a new fractional mechanics.

\section{Conclusions}
\label{sec-conclusions}
In this paper, we have considered various properties of differintegration and fractional differintegration and how they extend to the case where our fractional derivatives are defined using a Mittag-Leffler kernel. These results can be seen as a continuation of the work of \cite{atangana} -- in which, after the definition of fractional derivatives with Mittag-Leffler kernel was established, results on Laplace transforms and Lipschitz-type bounds for such derivatives were proved -- and a parallel development alongside e.g. \cite{abdeljawad2} and \cite{djida}, working on building the fundamental theory of the AB model of fractional calculus and establishing results which can be used later in more advanced study and applications.

First of all, in section \ref{sec-series}, we established a new formula for these derivatives, both those of ABR type and those of ABC type. This formula is in the form of an infinite series of Riemann--Liouville fractional integrals, and is in some contexts easier to handle than the original formula established in \cite{atangana}, since it no longer necessitates dealing with the transcendental Mittag-Leffler function. In our opinion, the new series formula gives a deeper connection to the non-locality properties which are so important in fractional calculus, and it opens new gates for studying non-locality phenomena in fractional variational principles and control theory. We frequently used this new formula in proving the later results of this paper.

In section \ref{sec-ode}, we constructed solutions for certain classes of fractional ordinary differential equations, both of Riemann--Liouville type and of Caputo type. Although the differential equations considered are all relatively simple, these results provide a window into the broader field of differential equations defined using fractional derivatives with Mittag-Leffler kernel, and they may be extended later on. Differential equations in the AB model have already found applications in e.g. diffusion processes \cite{atangana} and electrical circuits \cite{gomez-aguilar2}, so methods to solve them analytically will surely come in useful.

In section \ref{sec-semigroup}, we found conditions for the semigroup property -- one of the most important things to consider in \textit{any} definition of fractional derivatives and integrals -- to hold with the new definition involving Mittag-Leffler kernels, and established the perhaps surprising result that it almost never holds under this definition.

In sections \ref{sec-product} and \ref{sec-chain}, we used the new series formulae established in section 1 to prove extensions of the product rule and chain rule to the new scenario. This worked more or less exactly as expected -- albeit resulting in more complicated product and chain rule formulae than the original, but this is only to be expected in fractional calculus. These results, like the new series formula of section \ref{sec-series}, will be useful for numerical computation of the AB derivatives of many specific functions. We also demonstrated how the chain rule has direct applications in fractional dynamical systems, and how we can use it to construct a new fractional mechanics.

All of these results, rigorously proved for all functions which are differentiable according to the new definition, form part of the foundations for the theory of AB differintegrals. They may be used in proving many later results in this field, and for solving more advanced differential equations defined using the AB formula.

One possible direction in which to generalise the results of this paper is by looking at higher-order derivatives, the case where $\alpha>1$. The original publication \cite{atangana} which introduced the AB definition of differintegration only defined it for $0<\alpha<1$, but in the year since then, generalisations to $\alpha>1$ have been considered, see e.g. \cite{abdeljawad3}. It would be interesting to see how the theorems proved above can be extended to these cases, and this should be possible since the higher-order derivatives are defined simply by combining standard differentiation and integration with Definitions \ref{ABRL-defn} and \ref{ABC-defn}.

Another direction in which we would like to extend is to consider more complicated differential equations than those solved thus far, with the hope of finding exact analytic solutions if possible.

\section*{Acknowledgements}
The second author's research was supported by a grant from the Engineering and Physical Sciences Research Council, UK.

\end{document}